\theoremstyle{plain}
\newtheorem{theorem}{Theorem}[section]
\newtheorem{lemma}[theorem]{Lemma}
\newtheorem{prop}[theorem]{Proposition}
\theoremstyle{definition}
\newtheorem{remark}[theorem]{Remark}
\numberwithin{equation}{section}
\def\be{\begin{equation}}
\def\ee{\end{equation}}
\begin{document}

\title [Boundary Expansions of Complete Conformal Metrics]
{Boundary Expansions of Complete Conformal Metrics with Negative Ricci Curvatures}

\author{Yue Wang}
\address{School of Mathematical Sciences\\
Peking University\\
Beijing, 100871, China}  \email{yuewang37@pku.edu.cn}
\begin{abstract}
 We study the boundary behaviors of a complete conformal metric which solves the $\sigma_k$-Ricci problem on the interior of a manifold with boundary. We establish asymptotic expansions and also $C^1$ and $C^2$ estimates for this metric multiplied by the square of the distance in a small neighborhood of the boundary.
\end{abstract}
\thanks{The author acknowledges the support of NSFC Grant NSFC11571019.}
\maketitle

\section{Introduction}

Let $(M, \partial M, g)$ be a smooth Riemannian manifold with boundary and $1\le k\le n$.
We consider the following problem:
\begin{align}\label{M.G}
\sigma_k[-g^{-1}Ric(e^{2u}g)]&=(n-1)^kC_n^ke^{2ku}\quad\text{in } M\setminus\partial M,\\
\label{M.G.bdry}
u&=\infty \quad\text{on } \partial M,
\end{align}
where $C_n^k=\binom{n}{k}$, $Ric(e^{2u}g)$ is the Ricci curvature of the conformal metric $e^{2u}g$,
and $\sigma_k(A)$ is the $k$-th elementary symmetric polynomial
in the eigenvalues of the symmetric matrix $A$. Let $\Gamma_k^+$
be the connected component of the set $\{\sigma_k > 0\}$ which contains the positive definite cone.

Gursky, Streets and Warren \cite{M.Gursky1} proved that \eqref{M.G} and \eqref{M.G.bdry} admit
a unique solution $u\in C^\infty(M\setminus \partial M)$ with an additional requirement that
$-Ric(e^{2u}g)\in \Gamma_k^+$. Moreover, $e^{2u}g$ is a complete metric and
\begin{align}\label{G-expansion}
    \lim_{x\rightarrow \partial M}\big[u+\log d\big]=0,
\end{align}
where $d$ is the distance to $\partial M$. Refer to Theorem 1.4 in \cite{M.Gursky1}.
By comparing \eqref{M.G} with the equation in Theorem 1.4 \cite{M.Gursky1}, we note that
a constant $(n-1)^kC_n^k$ is inserted in the right-hand side of \eqref{M.G}. With the newly inserted
constant factor,  the constant term in the expansion \eqref{G-expansion} is zero.

In this paper, we study further expansions of $u$ near the boundary. For brevity, we consider the case
that $g$ is the standard Euclidean metric.
Assume $\Omega\subseteq \mathbb R^n$ is a bounded smooth domain, for $n\ge 3$. For $u\in C^2(\Omega)$, define
a symmetric matrix $A(u)$ by
\begin{align}\label{Aij}
A(u)=(n-2)\nabla^2u+\Delta u I_{n\times n}+(n-2)[|\nabla u|^2I_{n\times n}-\nabla u\otimes\nabla u],
\end{align}
where $I_{n\times n}$ is the identity $n\times n$ matrix. We are led to the following problem:
\begin{align}\label{main}
\sigma_k(A(u))&=(n-1)^kC_n^ke^{2ku}\quad \text{in } \Omega,\\
\label{mainbdry}
u&=\infty\quad \text{on } \partial\Omega,
 \end{align}
with the additional requirement that $A(u)\in \Gamma_k^+$.

Set $$e^{2u}=w^{\frac{4}{n-2}}.$$
For $k=1,$
\eqref{main} and \eqref{mainbdry} are reduced to the following more familiar form:
\begin{align}\label{ln}
\begin{split}
 \Delta w&=\frac{1}{4}n(n-2)w^{\frac{n+2}{n-2}}
 \quad \text{in } \Omega,\\
   w&=\infty\quad \text{on }  \partial\Omega.
 \end{split}
\end{align}
Loewner and Nirenberg \cite{LN} proved the existence of the unique positive solution of  \eqref{ln}
and Aviles and McOwen \cite{AM} proved the same result for the corresponding equation in general manifolds.
Andersson, Chru\'sciel and Friedrich \cite{ACF} and Mazzeo \cite{M} established
the polyhomogeneous expansions for the solutions.
Graham \cite{RG} studied the renormalized volume expansion.
He identified the first two renormalized volume coefficients and the information contained in the anomaly,
namely, the difference of the renormalized volumes corresponding to different choices of conformal representatives,
and proved the conformal invariance of the energy, the coefficient of the $\log$-term in the volume expansion.

We now present our main results for \eqref{main} and \eqref{mainbdry}. As in \eqref{G-expansion},
we denote by $d$ the distance function in $\Omega$ to $\partial \Omega$ and set
\begin{align}\label{Ddelta}
 D_\delta=\{x\in\Omega|d(x)\leq \delta\}\cap\Omega.
\end{align}
If $\partial \Omega$ is $C^\infty,$ then $d$ is $C^\infty$ in a sufficiently small neighborhood of $\partial \Omega.$
In this paper, we use the principal coordinates in $D_\delta$ and denote by $(x',d)$ the points in $D_\delta$,
for $\delta>0$ sufficiently  small.

We have the following result for the expansions of $u+\log d$ up to the first $\log$-term with the coefficients
in terms of principal curvatures and their derivatives.

\begin{theorem}\label{expansion}
Assume that $\Omega$ is a bounded smooth domain in $\mathbb{R}^{n},$ for $n\ge 3$,
and that $u$ is the solution of  \eqref{main}-\eqref{mainbdry}. Then,
 \begin{align}\label{e-d2}
   |u+\log d-c_1d-\cdots-c_{n-1}d^{n-1}-c_{n,1}d^n\log d|\leq C d^n\quad\text{in }  D_{\delta_2},
 \end{align}
where $C$ and $\delta_2$ are positive constants depending only on $\Omega, \,n$ and $k$,
 and  $c_1,\cdots, c_{n-1}$ and $c_{n,1}$ are smooth functions on $\partial\Omega$.
 \end{theorem}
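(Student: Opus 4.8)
The plan is to work in the principal coordinates $(x',d)$ near $\partial\Omega$ and to rewrite equation \eqref{main} as a PDE for the remainder $v:=u+\log d$. Since $|\nabla d|\equiv 1$ on $D_{\delta}$ and $\Delta d$ equals (minus) the sum of principal curvatures of the level sets of $d$, one computes $\nabla^2 u$, $\Delta u$ and $|\nabla u|^2$ in terms of $\nabla^2 v$, $\nabla v$, $v$ and the known geometric quantities $d$, $\nabla d$, $\nabla^2 d$. Substituting into \eqref{Aij}, the leading singular part of $A(u)$ as $d\to 0$ is $d^{-2}$ times the identity (up to the constant normalization built into \eqref{main}), which is why \eqref{G-expansion} forces $v\to 0$; the subleading terms involve the second fundamental form of $\partial\Omega$. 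Expanding $\sigma_k(A(u))=(n-1)^kC_n^k e^{2ku}=(n-1)^kC_n^k d^{-2k}e^{2kv}$ and clearing the factor $d^{-2k}$, one obtains a quasilinear elliptic (indeed, since $A(u)\in\Gamma_k^+$, uniformly elliptic along the solution) equation for $v$ of the schematic form
\begin{align}\label{plan-eq}
 d^2\,\mathcal{L}[v]\;=\;F\big(x',d,v,d\,\nabla v\big),
\end{align}
where $\mathcal{L}$ is a second-order operator with the top-order part linearized at the known background and $F$ is smooth in its arguments, with $F(x',0,0,0)=0$ and the right structure so that $d=0$ is a regular singular point.

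Next I would construct the expansion formally. Plugging the ansatz $v\sim c_1 d+\cdots+c_{n-1}d^{n-1}+c_{n,1}d^n\log d$ into \eqref{plan-eq} and matching powers of $d$ (and of $d^j\log d$), one gets at order $d^{j}$ a linear algebraic equation determining $c_j$ on $\partial\Omega$ from $c_1,\dots,c_{j-1}$ and from the principal curvatures of $\partial\Omega$ and finitely many of their tangential derivatives: the indicial operator in $d$ acting on $d^j$ produces a nonzero factor for $1\le j\le n-1$, so $c_1,\dots,c_{n-1}$ are uniquely and smoothly determined. At order $d^n$ the indicial factor vanishes — this is exactly the resonance familiar from the Loewner--Nirenberg case — so no pure $d^n$ power can be prescribed freely, and instead a $d^n\log d$ term is forced with coefficient $c_{n,1}$ determined by the lower-order data; this is the standard mechanism by which the first logarithm appears. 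All the $c_j$ and $c_{n,1}$ are then smooth functions of $x'\in\partial\Omega$ because $d$, $\nabla^2 d$ and $F$ are smooth there.

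Finally, to convert the formal expansion into the estimate \eqref{e-d2} I would set
$$
 \bar v := c_1 d+\cdots+c_{n-1}d^{n-1}+c_{n,1}d^n\log d,\qquad \phi:=v-\bar v,
$$
and show $|\phi|\le C d^n$ on a smaller collar $D_{\delta_2}$. By construction $\bar v$ solves \eqref{plan-eq} up to an error that is $O(d^{n})$ (after clearing $d^2$, i.e.\ $\mathcal{L}[\bar v]$ matches $d^{-2}F$ modulo $O(d^{n-2})$), so $\phi$ satisfies a linear uniformly elliptic equation with right-hand side $O(d^{n-2})$ and with the indicial exponents at $d=0$ lying outside the open interval $(0,n)$ on the relevant side. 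Then a barrier argument — using $A\phi_\pm := \pm(Ad^n + B d^{n+\epsilon})$ or simply $\pm A d^n$ with $A$ large, together with the maximum principle for the linearized operator and the smallness of $\delta_2$ — pins $|\phi|\le C d^n$. To run the maximum principle one needs the comparison operator to be genuinely elliptic near the boundary; this is where the hypothesis $A(u)\in\Gamma_k^+$ is used, since on the admissible cone the linearization of $\sigma_k^{1/k}$ is positive definite, uniformly along the known leading behavior of $u$.

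The main obstacle I anticipate is two-fold and lies in the transition from the formal computation to the rigorous bound: first, organizing the algebra of \eqref{plan-eq} cleanly enough that the indicial roots and the appearance of the single $d^n\log d$ resonance are transparent (the $\sigma_k$ nonlinearity couples the components of $A(u)$, so one must check that after clearing $d^{-2k}$ the linearized operator at the background really has the scalar indicial structure $j(j- n)+\dots$ and that the resonance sits precisely at $j=n$); and second, justifying the barrier/maximum-principle step uniformly up to the boundary despite the degeneracy of the coefficients as $d\to0$ and the mild non-smoothness introduced by the $\log$ term — this requires the earlier $C^1$ and $C^2$ estimates for $d^2 e^{2u}$ (equivalently, good control of $v$, $d\nabla v$, $d^2\nabla^2 v$) to know a priori that $\phi$ and its derivatives are controlled well enough to close the iteration order by order up to $d^n$.
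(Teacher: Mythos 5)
Your overall strategy — derive a formal series for $v=u+\log d$ from an indicial analysis of the equation in principal coordinates, identify the resonance at order $n$ that forces the $d^n\log d$ term, and then trap the true solution between barriers built from the formal solution — is the same strategy the paper uses. In particular your description of the indicial roots matches Lemma 2.1, where the explicit factor $(i-n)(i+1)$ in \eqref{ci} vanishes at $i=n$ and produces $c_{n,1}$ from \eqref{cn1}. However, two points in your outline diverge from what is actually needed, and one of them would be a genuine error if carried out as stated.

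First, for the comparison step you propose to linearize, derive a linear uniformly elliptic equation for $\phi=v-\bar v$, and invoke a maximum principle for the linearized operator; you then flag as a difficulty the degeneracy of its coefficients as $d\to 0$. The paper sidesteps this entirely: Theorem \ref{max-prin} is a comparison principle for the \emph{fully nonlinear} $\sigma_k$ operator, proved pointwise at an interior maximum of $u-v$ via the monotonicity of $\sigma_k$ on $\Gamma_k^+$ (Lemma 3.1 of \cite{M.Gursky2}). No linearization, and hence no uniform ellipticity of a linearized operator up to the boundary, is required; one only needs $A(\overline{v})\in\Gamma_k^+$ for the explicit barrier $\overline{v}=v+(Ad^n-Ad^{n+\varepsilon})$, which is checked directly for $\delta_2$ small.

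Second, and more seriously, you anticipate that closing the argument ``requires the earlier $C^1$ and $C^2$ estimates for $d^2 e^{2u}$''. Those are Theorems \ref{nabla} and \ref{C2}, which are proved in Sections 3 and 4 \emph{using} Theorem \ref{expansion}; invoking them here would be circular. The paper needs no such input. The only a priori information used is the sup-norm decay $u+\log d\to 0$ from \cite{M.Gursky1}, which Lemma \ref{e-d1} — a step missing from your outline — upgrades to the rate $|u+\log d|\le Cd$ by comparison with the barriers $-\log d\pm Cd$. With that rate in hand, the boundary condition $u\le\overline{v}$ on $\partial D_{\delta_2}$ can be arranged by taking $A=2C\delta_2^{1-n}$, and a computation of $\widetilde F(\overline{v})$ together with Theorem \ref{max-prin} yields $u\le\overline{v}$, with the lower bound obtained symmetrically. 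So the obstacle you identify is not the real one; the step you should instead worry about supplying is the intermediate decay Lemma \ref{e-d1} and its verification that the barriers are admissible (lie in $\Gamma_k^+$ and have the right sign for $F$) on a sufficiently thin collar.
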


We note that $c_1,\cdots, c_{n-1}$ and $c_{n,1}$ will be given by (\ref{c1}), (\ref{ci}), and (\ref{cn1}).

We point out that Theorem \ref{expansion}  holds for solutions of \eqref{M.G} and \eqref{M.G.bdry}, not just
for those of \eqref{main} and \eqref{mainbdry}. To emphasize the dependence of solutions on $k$,
we denote by $u_k$  the solution of \eqref{main} and \eqref{mainbdry}  and write its expansion as
 \begin{align}\label{cjk}
   w_k=-\log d+c_1^kd+c_2^kd^2+\cdot\cdot\cdot+c_{n-1}^kd^{n-1}+c_{n,1}^kd^n\log d+O(d^n).
\end{align}
Denote by $g_0$ the Euclidean metric.
As mentioned earlier, $u_k$ is the solution of \eqref{M.G} and \eqref{M.G.bdry} when $g=g_0.$
Consider a conformal metric $g=e^{2\rho}g_0$ and the corresponding solution $\widetilde{w}_k$
of \eqref{M.G} and \eqref{M.G.bdry}. Then, $\widetilde{w}_k$
also has an expansion in the form
\begin{align}\label{cjktilda}
    \widetilde{w}_k=-\log d+\widetilde{c}_1^kd+\widetilde{c}_2^kd^2
    +\cdot\cdot\cdot+\widetilde{c}_{n-1}^kd^{n-1}+\widetilde{c}_{n,1}^kd^n\log d+O(d^n),
\end{align}
In fact, we can take $\widetilde{w}_k+\rho$ as a function and then apply the uniqueness result for the Euclidean metric $g_0.$
A direct consequence of Theorem \ref{expansion} is the following result, which we can compare
with results in \cite{RG}.

\begin{prop}\label{conformal-diff}
Let
$c_j^k$, $j=1,\cdots,n-1$, be the terms as in \eqref{cjk}. Then,
\begin{align}\label{dif-invar}
 c_j^k-c_j^1 \text { is conformally invariant;}
\end{align}
namely,
\begin{align*}
 c_j^k-c_j^1 =\widetilde{c}_j^k-\widetilde{c}_j^1,
\end{align*}
where $\widetilde{c}_j^k$ is given in \eqref{cjktilda}, $j=1,\cdots,n-1$.
\end{prop}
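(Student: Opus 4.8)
The plan is to derive \eqref{dif-invar} from the exact identity $\widetilde w_k-\widetilde w_1=u_k-u_1$ on $\Omega$ and then read off coefficients by comparing \eqref{cjk} with \eqref{cjktilda}; the point is that the dependence of $\widetilde w_k$ on the conformal factor is the same for every $k$, so it cancels in the difference.

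First I would make the uniqueness reduction precise. Write $g=e^{2\rho}g_0$ with $\rho\in C^\infty(\overline\Omega)$ and let $\widetilde w_k$ be the solution of \eqref{M.G}--\eqref{M.G.bdry} for this $g$, so that $e^{2\widetilde w_k}g$ is complete, $-Ric(e^{2\widetilde w_k}g)\in\Gamma_k^+$, and $\widetilde w_k=\infty$ on $\partial\Omega$. Put $v:=\widetilde w_k+\rho$. Since $e^{2v}g_0=e^{2\widetilde w_k}g$, the metric $e^{2v}g_0$ is complete and $v=\infty$ on $\partial\Omega$ (because $\rho$ is bounded near $\partial\Omega$); moreover, as $(1,1)$-tensors,
\[
-g_0^{-1}Ric(e^{2v}g_0)=e^{2\rho}\bigl(-g^{-1}Ric(e^{2\widetilde w_k}g)\bigr),
\]
so $-Ric(e^{2v}g_0)\in\Gamma_k^+$ as $\Gamma_k^+$ is a cone, and, since $\sigma_k(\lambda A)=\lambda^k\sigma_k(A)$ for a scalar $\lambda$,
\[
\sigma_k\bigl[-g_0^{-1}Ric(e^{2v}g_0)\bigr]=e^{2k\rho}(n-1)^kC_n^k e^{2k\widetilde w_k}=(n-1)^kC_n^k e^{2kv}.
\]
Thus $v$ solves \eqref{M.G}--\eqref{M.G.bdry} with $g=g_0$, and the uniqueness in Theorem 1.4 of \cite{M.Gursky1} forces $v=u_k$, i.e. $\widetilde w_k+\rho=u_k$. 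Running the same argument with $k=1$ gives $\widetilde w_1+\rho=u_1$, and subtracting yields $\widetilde w_k-\widetilde w_1=u_k-u_1$ on $\Omega$.

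It remains to match coefficients. Subtracting the two instances of \eqref{cjk}, the $-\log d$ terms cancel and Theorem \ref{expansion} gives, near $\partial\Omega$,
\[
u_k-u_1=\sum_{j=1}^{n-1}\bigl(c_j^k-c_j^1\bigr)d^j+\bigl(c_{n,1}^k-c_{n,1}^1\bigr)d^n\log d+O(d^n),
\]
while subtracting the two instances of \eqref{cjktilda} gives
\[
\widetilde w_k-\widetilde w_1=\sum_{j=1}^{n-1}\bigl(\widetilde c_j^k-\widetilde c_j^1\bigr)d^j+\bigl(\widetilde c_{n,1}^k-\widetilde c_{n,1}^1\bigr)d^n\log d+O(d^n).
\]
An expansion of the form ``polynomial of degree $\le n-1$ in $d$, plus a multiple of $d^n\log d$, plus $O(d^n)$'' has uniquely determined coefficients: the difference of two such is $O(d^n)$, a nonzero polynomial coefficient of order $\le n-1$ would contradict this, and then the $d^n\log d$ coefficients must agree as well. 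Since the two displays represent the same function on a neighborhood of $\partial\Omega$, we conclude $c_j^k-c_j^1=\widetilde c_j^k-\widetilde c_j^1$ for $j=1,\dots,n-1$ (and likewise $c_{n,1}^k-c_{n,1}^1=\widetilde c_{n,1}^k-\widetilde c_{n,1}^1$), which is \eqref{dif-invar}. Equivalently, Taylor expanding $\rho$ in the principal coordinate $d$, the relation $\widetilde w_k=u_k-\rho$ shows that the expansion of $\widetilde w_k$ is obtained from that of $u_k$ by subtracting the Taylor series of $\rho$, which does not depend on $k$.

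The only step with genuine content is the uniqueness reduction: one must know that rescaling the \emph{reference} metric, $g\mapsto e^{2\rho}g_0$, while keeping the solution metric fixed, transforms the $g$-version of \eqref{M.G}--\eqref{M.G.bdry} into the $g_0$-version — precisely the conformal covariance reflected in the factor $e^{2k\rho}$ above — and preserves completeness, the cone condition, and the boundary blow-up; all of these are immediate here. Everything else is bookkeeping: no new estimate is needed, since the claim stops at order $d^{n-1}$, strictly below the first logarithmic term, and any $k$-independent constant term that \eqref{cjktilda} may carry (arising from $\rho|_{\partial\Omega}$) is harmless because it is annihilated by the difference.
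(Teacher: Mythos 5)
Your proof is correct and follows essentially the same route as the paper's: conformal covariance of the operator transfers $\widetilde w_k$ to a solution $\widetilde w_k+\rho$ of the $g_0$-problem, uniqueness forces $\widetilde w_k+\rho=u_k$, and then the coefficients follow by matching the expansions (the paper does this by writing $\widetilde c_j^k=c_j^k-\rho_j$ directly, you do it by first subtracting the $k$ and $1$ identities to cancel $\rho$ — a cosmetic difference). You supply a bit more detail on why completeness, the cone condition, and coefficient uniqueness are preserved, but no new idea is introduced.
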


Next, we derive the $C^1$ and
$C^2$ estimates for $u+\log d$.

\begin{theorem}\label{nabla}
Assume that $\Omega$ is a bounded smooth domain in $\mathbb{R}^{n},$ for $n\ge 3$,
     and that $u$ is the solution of  \eqref{main}-\eqref{mainbdry}. Then,
\begin{align*}
    |\nabla(u+\log d -c_1d)|\leq Cd^\alpha\quad\text{in }  D_{\delta_3},
\end{align*}
where $C$ and $\delta_3$ are positive constants depending only on $ \Omega,$ $n$ and $k$,
$c_1$ is the function in \eqref{c1}, $\alpha={1}/{2}$ when $n=3$ and  $\alpha=1$  when $n\ge4$.
\end{theorem}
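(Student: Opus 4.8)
The plan is to exploit that $\phi:=u+\log d$ solves, near $\partial\Omega$, a uniformly elliptic equation whose second–order part degenerates like $d^2$ at the boundary, to treat the polynomial $\bar\phi:=c_1d+\cdots+c_{n-1}d^{n-1}+c_{n,1}d^n\log d$ of Theorem \ref{expansion} as an approximate solution, and to upgrade the $C^0$ bound $|\phi-\bar\phi|\le Cd^n$ to a $C^1$ bound by an interior–estimate–and–rescaling argument on balls of radius comparable to the distance to $\partial\Omega$. Concretely, since $A(u)$ has entries of size $d^{-2}$ near $\partial\Omega$, put $B[\phi]:=d^2A(u)$. Substituting $u=\phi-\log d$, every entry of $B[\phi]$ is a polynomial in $d,\nabla d,\nabla^2d,d\nabla\phi,d^2\nabla^2\phi$, so by the interior a priori bounds $|\nabla u|\le Cd^{-1}$, $|\nabla^2u|\le Cd^{-2}$ available for \eqref{main} away from $\partial\Omega$ (cf.\ \cite{M.Gursky1}), $B[\phi]$ is bounded in a collar $D_{\delta_2}$ (and in fact $B[\phi]\to(n-1)I$ as $d\to0$). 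By homogeneity of $\sigma_k$, \eqref{main}--\eqref{mainbdry} become $\sigma_k(B[\phi])=(n-1)^kC_n^ke^{2k\phi}$ in $\Omega$ with $\phi=0$ on $\partial\Omega$; the right side is bounded above and below by \eqref{G-expansion}, and $B[\phi]\in\Gamma_k^+$ is bounded, so $B[\phi]$ stays uniformly away from $\partial\Gamma_k^+$ and $F^{ij}:=\partial\sigma_k/\partial B_{ij}$ is uniformly elliptic and bounded near $\partial\Omega$. The decisive structural point is that the only terms of $B[\phi]$ carrying a first derivative of $\phi$ appear multiplied by $d\nabla d$ or by $d^2\nabla\phi$; hence the linearization of this equation has the form $d^2\widetilde F^{ij}\partial_{ij}(\cdot)+b^i\partial_i(\cdot)+c(\cdot)$ with $\widetilde F^{ij}$ uniformly elliptic and bounded, $b^i=O(d)$, and $c=O(1)$.

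Let $E:=\phi-\bar\phi$, so $|E|\le Cd^n$ in $D_{\delta_2}$ by Theorem \ref{expansion}; inserting $\bar\phi$ into the equation leaves a residual $R$ which, by the construction of $c_1,\dots,c_{n-1},c_{n,1}$ in that theorem, satisfies $|R|\le Cd^n$ and $|\nabla R|\le Cd^{n-1}\log(1/d)$. Subtracting the two equations and integrating the linearization along $t\mapsto\bar\phi+tE$ gives a linear equation $d^2\widetilde F^{ij}\partial_{ij}E+b^i\partial_iE+cE=f$ in $D_{\delta_2}$ with $\widetilde F^{ij},b^i,c$ as above and $|f|=|R|\le Cd^n$. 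Now fix $x_0\in D_{\delta_3}$, set $\rho=d(x_0)$, and note $d\asymp\rho$ on $B_{\rho/4}(x_0)\subset\subset\Omega$. The scaling $\hat u(y)=u(x_0+\rho y)+\log\rho$ solves the \emph{same} equation $\sigma_k(A(\hat u))=(n-1)^kC_n^ke^{2k\hat u}$ on $B_{1/2}(0)$ with $\hat u$ bounded and $A(\hat u)$ uniformly in $\Gamma_k^+$, so the interior estimates for this problem give $\|\hat u\|_{C^{2,\gamma}(B_{1/4}(0))}\le C$ uniformly in $x_0$; equivalently, after this rescaling the coefficients of the linear equation for $E$ have uniformly bounded $C^\gamma$ norms. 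Setting $\psi(y)=E(x_0+\rho y)$, the equation for $E$ and $d\asymp\rho$ make $\psi$ the solution on $B_{1/4}(0)$ of a \emph{uniformly elliptic} equation whose coefficients are bounded independently of $\rho$ — here it is essential that the rescaled drift $\rho^{-1}b^i$ stays bounded, which is exactly the structural fact above — with $\|\psi\|_{L^\infty}+\|\text{RHS}\|_{C^\gamma}\le C\rho^n\log(1/\rho)$. Interior Schauder estimates then give $|\nabla_y\psi(0)|\le C\rho^n\log(1/\rho)$, that is $|\nabla E(x_0)|=\rho^{-1}|\nabla_y\psi(0)|\le Cd(x_0)^{n-1}\log(1/d(x_0))$.

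Finally, since $c_2,\dots,c_{n-1},c_{n,1}$ are smooth on $\partial\Omega$ and $d\in C^\infty(D_{\delta_2})$, a direct computation gives $\nabla(\bar\phi-c_1d)=2c_2\,d\,\nabla d+O\!\big(d^2\log(1/d)\big)=O(d)$. Hence, in $D_{\delta_3}$,
\begin{align*}
|\nabla(u+\log d-c_1d)|\le|\nabla(\bar\phi-c_1d)|+|\nabla E|\le C\,d+C\,d^{\,n-1}\log\tfrac1d\le C\,d^{\alpha}
\end{align*}
for any $\alpha\le1$ once $n\ge3$; in particular this yields the claimed estimate (indeed with $\alpha=1$ for every $n\ge3$, a fortiori $\alpha=\tfrac12$ when $n=3$). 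It only remains to shrink $\delta_3$ so that the a priori bounds, the uniform ellipticity of $F^{ij}$, and Theorem \ref{expansion} all hold throughout $D_{\delta_3}$.

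The step I expect to be the main obstacle is the interior regularity input in the rescaling argument: one must have quantitative control of $u$ at the scale $d(x)$, so that the nonlinear coefficients $F^{ij}(B[\bar\phi+tE])$ of the linearized equation have rescaled Hölder norms bounded uniformly as $d(x)\to0$; this is where the $\Gamma_k^+$ structure of the fully nonlinear operator has to be married to the crude a priori gradient bound. A second, indispensable, point to be checked carefully is that \emph{every} first–order term produced by linearizing the equation for $\phi$ carries a factor of $d$ — otherwise the rescaled first–order coefficient $\rho^{-1}b^i$ would blow up and the whole rescaling scheme would break down.
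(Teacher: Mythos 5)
Your strategy is genuinely different from the paper's: rather than the Bernstein-type maximum principle applied to $h=(1+\tfrac12|\nabla(w/\psi^\alpha)|^2)e^{\phi(w/\psi^{2+2\alpha})}$ used in the paper (which explains the dichotomy $\alpha=1/2$ for $n=3$, $\alpha=1$ for $n\ge4$ via the requirement $2+2\alpha\le n$), you linearize around the formal expansion and attempt to close the estimate via rescaled interior Schauder estimates. Your structural observation that every first-order term in the linearization carries a factor $O(d)$, so that $\rho^{-1}b^i$ stays bounded after rescaling, is correct and is one of the two load-bearing points you flag.

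The other load-bearing point, however, is where the argument breaks. You assert that ``the interior estimates for this problem give $\|\hat u\|_{C^{2,\gamma}(B_{1/4}(0))}\le C$ uniformly in $x_0$,'' citing \cite{M.Gursky1} for the a priori bounds $|\nabla u|\le Cd^{-1}$, $|\nabla^2 u|\le Cd^{-2}$. Neither of these is in \cite{M.Gursky1}: that paper provides existence, uniqueness and the $C^0$ asymptotics \eqref{G-expansion}, not local gradient or Hessian bounds. The bounds you invoke are exactly the content of Lemma \ref{nabla-const} and Theorem \ref{C2} of the present paper, proved via nontrivial Gursky--Viaclovsky-type maximum principle arguments -- and, crucially, Theorem \ref{C2} is established only for $n\ge 8$. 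Your Schauder step needs the coefficients $F^{ij}(B[\bar\phi+tE])$ to have uniformly bounded $C^\gamma$ norms at the rescaled unit scale, which in turn requires a uniform $C^{2,\gamma}$ bound on $\hat u$; for fully nonlinear $\sigma_k$ equations with $3\le k\le n-1$ this is not a routine citation and is essentially the deepest estimate in the paper. Without it the linear equation for $E$ does not satisfy the hypotheses of interior Schauder, and the bound $|\nabla E(x_0)|\le Cd^{n-1}\log(1/d)$ is unjustified. A symptom of the gap: if your argument went through it would give $\alpha=1$ for all $n\ge3$ (since $d^2\log(1/d)=O(d)$ when $n=3$) and would also give the $C^2$ estimate for all $n\ge3$, both strictly stronger than what is known; the present paper's restrictions $\alpha=1/2$ ($n=3$) and $n\ge 8$ for the $C^2$ estimate are precisely the obstructions its maximum-principle approach runs into, and your scheme has not bypassed them but rather assumed them away.
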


\begin{theorem}\label{C2}
Assume that $\Omega$ is a bounded smooth domain in $\mathbb{R}^{n},$ for $n\ge 8,$
     and that $u$ is the solution of  \eqref{main}-\eqref{mainbdry}. Then,
\begin{align*}
    |\nabla^2(u+\log d -c_1d)|\leq C\quad\text{in }  D_{\delta_4},
\end{align*}
 where $C$ and $\delta_4$ are positive constants depending only on $ \Omega,$ $n$ and $k$,
 and $c_1$ is the function in \eqref{c1}.
\end{theorem}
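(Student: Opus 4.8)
The plan is to reduce the assertion to a uniform interior second–derivative estimate for the $\sigma_k$–equation at unit scale, and then to transport through that rescaling the finer decay furnished by Theorems~\ref{expansion} and \ref{nabla}. Fix $x_0\in D_{\delta_4}$, set $d_0:=d(x_0)$, and let $\nu_0$ be the interior unit normal to $\partial\Omega$ at the boundary point nearest to $x_0$. Consider
\[
\hat u(y):=u(x_0+d_0 y)+\log d_0,\qquad y\in B_{1/2}(0).
\]
Since $A(\hat u)(y)=d_0^2 A(u)(x_0+d_0 y)$ and $e^{2k\hat u}=d_0^{2k}e^{2ku}$, the function $\hat u$ again solves \eqref{main}. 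The rescaled distance $\hat d(y):=d_0^{-1}d(x_0+d_0 y)$ satisfies $\hat d\in(\tfrac12,\tfrac32)$ on $B_{1/2}(0)$ and converges, in every $C^m$, to the affine function $1+\nu_0\!\cdot y$ as $d_0\to0$. Because $-\log(1+\nu_0\!\cdot y)$ is an exact solution of \eqref{main} on the half–space $\{1+\nu_0\!\cdot y>0\}$ (for it one computes $A=(n-1)(1+\nu_0\!\cdot y)^{-2}I$), the expansion \eqref{e-d2} — used only through its term $c_1 d$ — and Theorem~\ref{nabla} show that, uniformly in $x_0$, $\hat u$ is $C^1$–close to the smooth model $-\log\hat d$; in particular $\|\hat u\|_{C^1(B_{1/2}(0))}\le C$.

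The decisive step is the a priori bound
\[
\|\hat u\|_{C^{2,\alpha}(B_{1/4}(0))}\le C \qquad(C\text{ independent of }x_0),
\]
after which Evans--Krylov and Schauder bootstrapping give uniform $C^m(B_{1/4}(0))$ estimates for every $m$. Granting it, write $\hat{\bar u}(y):=(-\log d+c_1 d)(x_0+d_0 y)+\log d_0$ and $\hat\psi:=\hat u-\hat{\bar u}=\psi(x_0+d_0\cdot)$, where $\psi:=u+\log d-c_1 d$; by \eqref{e-d2}, $\psi=O(d^2)$, so $\|\hat\psi\|_{C^0(B_{1/4}(0))}\le Cd_0^2$. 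Using
\[
\sigma_k(A(\hat u))-\sigma_k(A(\hat{\bar u}))=a^{ij}\big(A(\hat u)-A(\hat{\bar u})\big)_{ij},\qquad a^{ij}:=\int_0^1\sigma_k^{ij}\big((1-t)A(\hat{\bar u})+tA(\hat u)\big)\,dt,
\]
the convexity of $\Gamma_k^+$, and the fact that $A(\hat u)-A(\hat{\bar u})$ is linear in $(\nabla^2\hat\psi,\nabla\hat\psi)$ with coefficients built from $\nabla\hat u,\nabla\hat{\bar u}$, one obtains for $\hat\psi$ a linear, uniformly elliptic equation $\mathcal L\hat\psi=\widehat{\mathcal E}$ on $B_{1/4}(0)$ whose coefficients are bounded in $C^\alpha$ (by the $C^{2,\alpha}$ bound and the smoothness of $\hat{\bar u}$) and whose right–hand side is the rescaling of the error $(n-1)^kC_n^ke^{2k(-\log d+c_1d)}-\sigma_k(A(-\log d+c_1d))=O(d^{2-2k})$; hence $\|\widehat{\mathcal E}\|_{C^\alpha(B_{1/4}(0))}\le Cd_0^2$. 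Interior Schauder then gives $\|\hat\psi\|_{C^{2,\alpha}(B_{1/8}(0))}\le C\big(\|\hat\psi\|_{C^0(B_{1/4}(0))}+\|\widehat{\mathcal E}\|_{C^\alpha(B_{1/4}(0))}\big)\le Cd_0^2$, and undoing the rescaling, $|\nabla^2\psi(x_0)|=d_0^{-2}|\nabla^2\hat\psi(0)|\le C$, which is the assertion.

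The main obstacle is precisely the uniform interior $C^2$ estimate for the fully nonlinear $\sigma_k$–equation: in contrast with the $C^0$ and $C^1$ bounds, a genuine interior second–derivative estimate for $\sigma_k$ is delicate when $k$ is large, and here it must be extracted by exploiting the proximity of $\hat u$ to the explicit model $-\log\hat d$. Concretely, one differentiates the equation twice, uses the concavity of $\sigma_k^{1/k}$ on $\Gamma_k^+$ to absorb the third–order terms, and combines this with the closeness $\hat u+\log\hat d=O(d_0)$ and the uniform ellipticity (via Newton--MacLaurin, since $\sigma_k(A(\hat u))$ is bounded below) into a differential inequality for a cut–off of the largest eigenvalue of $\nabla^2\hat u$. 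It is in balancing that inequality — in particular, in weighing the gain coming from the smallness of $\hat u+\log\hat d$ against the two powers of $d_0$ lost on scaling back — that the dimension restriction $n\ge 8$ enters, and the same scheme does not close for $3\le n\le 7$.
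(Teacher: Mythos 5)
Your argument is a local rescaling-plus-Schauder scheme, which is genuinely different from the paper's. The paper works globally: it sets $w=u+\log\psi-f$ where $f=\widetilde c_1\psi+\dots+\widetilde c_{n,1}\psi^n\log\psi$ carries the \emph{entire} formal expansion (not just $c_1 d$), proves $\Delta w\geq -C$ by Maclaurin, and then bounds $\partial_\gamma\partial_\gamma w$ from above by a Pogorelov-type maximum-principle argument applied to
\[
h(p,\gamma)=\partial_\gamma\partial_\gamma w(p)+\Lambda\,\frac{|\nabla w|^2}{\psi^{2\alpha}}(p),\qquad \alpha=3.
\]
The restriction $n\geq 8$ enters exactly here: one needs $|\nabla w|\lesssim\psi^3$, which (by Remark~\ref{alpha-n}) forces $n\geq 2+2\alpha=8$, and $\alpha=3$ is forced in turn by the need to have the good term $\Lambda\psi^{3+\alpha}\psi^{-2\alpha}Q_{ij}w_{ki}w_{kj}$ dominate after the equation differentiated twice is multiplied by $\psi^{3+\alpha}$. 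The paper never relies on a uniform $C^{2,\alpha}$ estimate for the rescaled solution.

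The central gap in your proof is the ``decisive step'' $\|\hat u\|_{C^{2,\alpha}(B_{1/4})}\leq C$, which you assume and only gesture at in the final paragraph. This is not a side estimate: after rescaling, this \emph{is} the fully nonlinear interior second-derivative bound for the $\sigma_k$-Ricci operator, which is the entire content of a $C^2$ estimate at this level of generality; the Schauder improvement that follows is routine by comparison. You cannot defer it. Moreover, your diagnosis of where $n\geq8$ enters is not coherent with the structure you set up: the decisive step is a unit-scale, $d_0$-free interior estimate, so no ``two powers of $d_0$ lost on scaling back'' can be fed into it, and if it held uniformly then your Schauder step (which only uses Theorem~\ref{expansion} to order $d^2$) would close for every $n\geq 3$, in contradiction with the statement you are asked to prove under $n\geq 8$. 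Either the decisive step is true in all dimensions, in which case your argument would silently prove something the paper does not claim and you owe an explanation, or it fails for $n<8$ and you must exhibit why; neither is addressed. You would need to actually carry out the interior $C^2$ bound — for instance, adapt the maximum-principle computation of Gursky--Viaclovsky with a cutoff localizing to $B_{1/4}$ — and trace carefully whether and how the dimension constraint arises there; as written the argument is incomplete at its crucial point.

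Two smaller points worth recording. First, to make $\mathcal L$ uniformly elliptic with $C^\alpha$ coefficients you need not only $A(\hat u),A(\hat{\bar u})\in\Gamma_k^+$ but uniform two-sided bounds on their eigenvalues, which again presupposes the decisive step — this makes the dependence on the unproven estimate even tighter than your exposition suggests. Second, the $C^\alpha$ bound $\|\widehat{\mathcal E}\|_{C^\alpha(B_{1/4})}\leq Cd_0^2$ is correct but needs the remark that $\widetilde F(-\log d+c_1 d)$ vanishes to order $2$ smoothly, so that the $y$-derivatives of $\widehat{\mathcal E}$ gain the expected factor of $d_0$; you assert the bound without this justification.
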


The paper is organized as follows.
In Section 2, we prove the boundary expansion of $u+\log d$ and
the conformal invariance of the difference of corresponding coefficients in the expansions.
In Section 3 and Section 4, we derive the $C^1$ and
$C^2$ estimates for $u+\log d$, respectively.

The author would like to thank Professor Matthew Gursky and Professor Qing Han for suggesting the problem studied in this
paper and many helpful discussions. The author is very grateful to
them for their stimulating ideas and great encouragement.

\section{Boundary Expansions}

Consider the operators
 \begin{align}\label{main1}
F(u)=\sigma_k(\lambda(A(u)))-(n-1)^kC_n^ke^{2ku},\end{align}
and
\begin{align}
\widetilde{F}(u)=F(u)d^{2k}.
 \end{align}
 By \cite{M.Gursky1}, there exists a unique solution $u\in C^\infty(\Omega)$ of \eqref{main}-\eqref{mainbdry}.
 Then, $F(u)=0$ in $\Omega.$

 Our goal in this section is to  derive boundary expansion for $u$ involving all local terms by the maximum principle in a sufficiently small neighborhood of $\partial \Omega.$

Denote by $(x',d)$ the principle coordinates near boundary and by
$\kappa_1, \cdots, \kappa_{n-1}$ the principal curvatures of $\partial \Omega$, with respect to
the interior unit normal vectors. We set
$$H_{\partial\Omega}=\kappa_1+\cdots+ \kappa_{n-1}.$$

In the following, we calculate the operator $F$ on polynomials of $d$ and always use $C$ for a positive constant depending only on $\Omega,$ $n$ and $k.$ We set
\begin{align}\label{formal-solution}
    v=-\log d+c_0+c_1d+c_2d^2+\cdot\cdot\cdot+c_{n,1}d^n\log d,
\end{align}
where $c_i$ $i=1,\cdot\cdot\cdot,n-1,$ and $c_{n,1}$ are functions of $x'$ to be determined.

\begin{lemma}
Let $\Omega$ be a bounded smooth domain in $\mathbb{R}^{n},$ for $n\ge 3.$ Then, there exist functions
$c_1$, $\cdots$, $c_{n-1},$ and $c_{n,1}$ defined on $\partial\Omega$
such that, for $v$ defined in \eqref{formal-solution},
\begin{align}\label{dn}
\widetilde{F}(v)=O(d^{n+\tau}),
\end{align}
where $\tau$ is an arbitrarily given positive constant in $(0,1).$
\end{lemma}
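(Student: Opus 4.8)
The plan is to plug the ansatz \eqref{formal-solution} into the operator $\widetilde F$ and match coefficients order by order in $d$, using the principal coordinates $(x',d)$ in which the Euclidean metric has the warped-product form $g_0 = dd^2 + g_{ij}(x',d)\,dx'^i dx'^j$ with $g_{ij}$ depending smoothly and polynomially (in fact, quadratically, via $\kappa_a$) on $d$. The first task is a bookkeeping computation: express $\nabla^2 v$, $\Delta v$, and $|\nabla v|^2$ in these coordinates for $v = -\log d + \sum_{i\ge 0} c_i d^i + c_{n,1}d^n\log d$. Because $|\nabla d|\equiv 1$, the dangerous terms come from $\nabla^2(-\log d) = d^{-2}\,dd\otimes dd + d^{-1}\,\nabla^2 d$, where $\nabla^2 d$ encodes the second fundamental form of the level sets $\{d=\text{const}\}$, whose trace is $-\sum_a \kappa_a/(1-d\kappa_a) = -H_{\partial\Omega} + O(d)$. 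Thus $A(v)$ has a leading term of size $d^{-2}$ times a fixed matrix whose eigenvalues I would compute explicitly (the $dd\otimes dd$ direction versus the tangential directions), check that it lies in $\Gamma_k^+$, and whose $\sigma_k$ produces exactly the constant $(n-1)^k C_n^k$ after multiplying by $d^{2k}$ — this is precisely why the normalizing constant was inserted and why $c_0=0$. So $\widetilde F(v) = \sigma_k(\lambda(A(v)))d^{2k} - (n-1)^k C_n^k e^{2kv}d^{2k}$ already vanishes to leading order, and the game is to kill successive powers of $d$.

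Next I would set up the induction. Writing $A(v) = d^{-2}(A_0 + d A_1 + d^2 A_2 + \cdots + \text{(log corrections)})$ where $A_0$ is the fixed matrix above and each $A_j$ for $j\ge 1$ is an affine (indeed explicit polynomial) function of $c_1,\dots$ and their tangential derivatives plus known geometric data ($\kappa_a$ and their $x'$-derivatives), I expand $\sigma_k(\lambda(A(v)))d^{2k} = \sigma_k(A_0 + dA_1 + \cdots)$ as a polynomial, and similarly expand $e^{2kv}d^{2k} = \exp(2k(c_0 + c_1 d + \cdots))$. Matching the coefficient of $d^j$ in $\widetilde F(v)$ yields, for each $j = 1, 2, \dots$, an equation of the form $L\,c_j = (\text{polynomial in } c_1,\dots,c_{j-1}, \kappa_a, \partial_{x'}\kappa_a)$, where $L$ is multiplication by the linearization constant $\frac{\partial \sigma_k}{\partial A}\big|_{A_0}$ contracted against the $dd\otimes dd$ direction (the only place $c_j$ enters $A_j$ to leading order, since $\partial_d^2(c_j d^j) = j(j-1)c_j d^{j-2}$ contributes $j(j-1)c_j$ to the $(dd)$-entry). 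The coefficient of $c_j$ in that entry is $j(j-1)(n-2)$ up to the symmetric-function weight, so $L$ is invertible precisely when $j\ne 0,1$; hence $c_1$ is determined from the $d^0$ equation (where a different structure appears, with the coefficient $n(n-2)$ from the tangential Laplacian/zeroth-order terms rather than $j(j-1)$), and $c_2,\dots,c_{n-1}$ are then solved recursively. The obstruction is exactly at order $d^{n-1}$ in $\widetilde F$ (equivalently the coefficient multiplying $d^n$), where the linear operator acting on $c_n$ degenerates — but this is where the log term $c_{n,1}d^n\log d$ enters: $\partial_d^2(d^n\log d) = n(n-1)d^{n-2}\log d + (2n-1)d^{n-2}$, so the extra $\log$-free piece $(2n-1)c_{n,1}$ supplies the missing contribution, letting us choose $c_{n,1}$ to cancel the obstruction and absorb the would-be $c_n$ equation. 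After that, everything of order $d^n$ and higher is lumped into the error, giving $\widetilde F(v) = O(d^{n+\tau})$ for any $\tau<1$ (the $\tau$, rather than a clean $O(d^{n})$ or $O(d^{n+1})$, reflects that the $d^n\log d$ and $d^{n+1}\log d$ type remainders are $O(d^{n+\tau})$ but not $O(d^{n+1})$; one simply does not continue the expansion past the first log).

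The concrete steps, in order: (1) write $g_0$, $d$, $\nabla^2 d$, $\Delta d$ in principal coordinates and record their $d$-Taylor expansions in terms of $\kappa_a(x')$; (2) compute $A(v)$ and identify $A_0$, verify $A_0\in\Gamma_k^+$ and $\sigma_k(A_0) = (n-1)^k C_n^k$, forcing $c_0 = 0$; (3) differentiate $\sigma_k$: record $\sigma_k(A_0 + B) = \sigma_k(A_0) + \sigma_k^{ij}(A_0)B_{ij} + \tfrac12 \sigma_k^{ij,rs}(A_0)B_{ij}B_{rs} + \cdots$ and note $\sigma_k^{ij}(A_0)$ is diagonal with distinct entries in the normal vs.\ tangential blocks; (4) expand $e^{2kv}d^{2k} = \exp(2k\sum c_i d^i + 2k c_{n,1}d^n\log d)$; (5) collect the $d^0$ coefficient of $\widetilde F(v)$, solve the resulting ODE-free algebraic equation for $c_1$ in terms of $H_{\partial\Omega}$ (this reproduces formula \eqref{c1}); (6) run the induction for $j = 2,\dots,n-1$, at each stage solving a linear equation with invertible coefficient $\propto j(j-1)$ for $c_j$ as a smooth function of $x'$; (7) at order $d^{n-1}$, where the naive coefficient $n(n-1)$ of the (now absent) $c_n$-equation is replaced by the degenerate situation, introduce $c_{n,1}$ and solve for it, obtaining \eqref{cn1}; (8) bound all remaining terms, including the cross terms between the log term and the polynomial terms and the higher Taylor remainders of $g_{ij}(x',d)$, by $Cd^{n+\tau}$.

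The main obstacle is step (6)–(7): keeping the algebra of $\sigma_k^{ij}$ and $\sigma_k^{ij,rs}$ organized so that one can see the linear term in $c_j$ has the clean coefficient $j(j-1)(n-2)\cdot\sigma_{k-1}(\lambda(A_0)|_{\text{normal deleted}})$ (nonzero, hence invertible) while all the genuinely nonlinear and geometric contributions land in lower-order data — and then identifying precisely why $j = n$ is the resonant order where the $\log$ is forced. Everything else is routine but lengthy bookkeeping in the principal coordinates; I would suppress the explicit polynomial expressions and just assert existence and smoothness of the $c_j$, deferring the exact formulas \eqref{c1}, \eqref{ci}, \eqref{cn1} to their own displays.
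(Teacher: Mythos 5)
Your overall strategy --- substituting the ansatz \eqref{formal-solution} into $\widetilde F$, expanding $\widetilde A = d^2 A(v)$ in principal coordinates, and matching coefficients of $d^j$ to determine $c_j$ recursively until a resonance forces the $d^n\log d$ term --- is the same one the paper uses, and your observation about why the lemma only claims $O(d^{n+\tau})$ rather than $O(d^{n+1})$ (absorbing the next $d^{n+1}\log d$ remainder, after noting the $d^n\log d$ coefficient of $\widetilde F(v)$ vanishes identically) is consistent with the paper.

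The gap is in your indicial analysis, which is where the whole recursion lives. You assert the coefficient of $c_j$ in the order-$d^j$ equation is $j(j-1)(n-2)$, coming from $\partial_d^2(c_j d^j)$ in the $(n,n)$-entry of $A$, and you conclude the operator degenerates at $j=0,1$. That is incorrect for two separate reasons. First, since $\widetilde A_0 = (n-1)I_{n\times n}$ is isotropic, the Newton tensor $T_{k-1}(\widetilde A_0)$ (the linearization of $\sigma_k$) is a multiple of the identity, so the linearized operator sees $\operatorname{tr}(\widetilde A_1)$, and $c_j d^j$ enters through \emph{every} diagonal entry via the $\Delta u\, I_{n\times n}$ piece of \eqref{Aij}, not only the $(n,n)$-entry. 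Second, and more seriously, you have dropped two contributions of the same order $d^{j-2}$: the cross-term in $(n-2)[|\nabla u|^2 I - \nabla u\otimes\nabla u]$ between $\nabla(-\log d)$ and $\nabla(c_jd^j)$, which produces $\sim -j\,c_j d^{j-2}$, and the linearization of the source $e^{2kv}$, which after multiplying by $d^{2k}$ also produces $\sim c_j d^{j}$. Assembling all three (easiest to check for $k=1$, where the equation normalizes to $\Delta u + \tfrac{n-2}{2}|\nabla u|^2 = \tfrac{n}{2}e^{2u}$) gives a coefficient proportional to $j(j-1) - (n-2)j - n = (j+1)(j-n)$, exactly the denominator in \eqref{ci}. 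The indicial roots are $j=-1$ and $j=n$, \emph{not} $j=0,1$; this is what lets the recursion run cleanly through $j=1,\dots,n-1$ and forces the resonance --- hence the $d^n\log d$ term --- precisely at $j=n$. Your own paragraph separately asserts an obstruction at $j=n$, which cannot be reconciled with $j(j-1)(n-2)$, so the proposal is internally inconsistent as well as quantitatively wrong. The fix is to keep the gradient cross-terms and the linearized exponential in the indicial computation rather than just $\partial_d^2$.
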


\begin{proof} For $v$ as in \eqref{formal-solution}, set
\begin{align*}
\widetilde{A}_{ij}=\widetilde{A}_{ij}(v)=d^2A_{ij}(v).
\end{align*}
A straightforward calculation yields
\begin{align}\label{Atil}
\begin{split}
    \widetilde{A}_{aa}&=(n-1)+(n-2)\kappa_ad+H_{\partial\Omega}d-2(n-2)c_1d\\
    &\qquad+\text{polynomial of $d$ with power higher than } 2,\\
    \widetilde{A}_{an}&=\widetilde{A}_{ab}=\text{polynomial of $d$ with power higher than } 2,\\
    \widetilde{A}_{nn}&=(n-1)+H_{\partial\Omega}d+\text{polynomial of $d$ with power higher than }2.
   \end{split}
\end{align}
We now substitute $v$ in
$$\widetilde{F}(v)=\sigma_k(\lambda(\widetilde{A}(v)))-d^{2k}(n-1)^kC_n^ke^{2kv},$$
and arrange $\widetilde{F}(v)$ in an ascending order of $d.$ By requiring the constant term and the
coefficient of $d$ to be zero in $\widetilde{F}(v)$, we have
$$c_0=0,$$
and
\begin{align}\label{c1}
c_1=\frac{1}{2(n-1)}H_{\partial\Omega}.
\end{align}
For $i=2,\cdot\cdot\cdot,n-1,$ by requiring the coefficient of $d^{i},$ $i=2,\cdot\cdot\cdot,n-1,$ to be zero in $\widetilde{F}(v)$ successively, we have
\begin{align}\label{ci}
    c_i=\frac{1}{(i-n)(i+1)}G_i(c_1,c_2,\cdot\cdot\cdot,c_{i-1}),
\end{align}
where $G_i$ is a smooth function in $c_1,\cdot\cdot\cdot,c_{i-1}$ and their derivatives.
We point out that the coefficient of $d^n\log d$ in $\widetilde{F}(v)$ equals $0.$
By requiring the coefficient of $d^{n}$ to be zero in $\widetilde{F}(v),$ we have
\begin{align}\label{cn1}
    c_{n,1}=\frac{1}{(n-1)(n+1)}G_n(c_1,c_2,\cdot\cdot\cdot,c_{n-1}),
\end{align}
where $G_i$ is a smooth function in $c_1,\cdot\cdot\cdot,c_{n-1}$ and their derivatives.
Then we obtain the desired result.
\end{proof}

The functions  $c_1,\cdots,c_{n-1}$ and $c_{n,1}$ defined in \eqref{c1}, \eqref{ci} and \eqref{cn1} are functions
on $\partial\Omega$. They are  the coefficients of the so-called local terms, since
they can be expressed explicitly in terms of principal curvatures and their derivatives.
For a demonstration, we calculate $c_2$.

\begin{prop}\label{2parts-coefficient}
The function $c_2$ in \eqref{ci} has the following expression:
\begin{align}\label{c2}\begin{split}
    c_2&=\frac{n}{6(n-2)}\left\{\big(\frac{-3n+2}{4n(n-1)^2}-\frac{n^3-3n-n^2+4}{2n(n-1)^4}\big)
    H_{\partial\Omega}^2+\big(\frac{2}{n}+\frac{(n-2)^2}{2n(n-1)^3})\big|\Pi|^2\right\}
    \\&\qquad+k\big(-\frac{(n-2)}{12(n-1)^3}\big)\big|\overset{\circ}{\Pi}\big|^2,\end{split}
\end{align}
where $\Pi$ is the second fundamental form and
$\overset{\circ}{\Pi}$ is the trace-free second fundamental form, i.e.,
\begin{align*}
    \overset{\circ}{\Pi}=\Pi-\frac{1}{n}Hg.
\end{align*}
\end{prop}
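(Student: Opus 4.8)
The plan is to carry out explicitly the recursion from \eqref{ci} at the level $i=2$, tracking every contribution of order $d^2$ inside $\widetilde{F}(v)$. First I would refine the expansion \eqref{Atil} of $\widetilde{A}_{ij}(v)$ one more order in $d$. This requires two ingredients: the Taylor expansion of the distance function and of the induced metric in principal coordinates — where the $O(d^2)$ terms bring in $\kappa_a^2$, the derivatives of $\kappa_a$, and mixed curvature terms — and the $O(d^2)$ contributions of $\nabla^2 v$, $\Delta v$, and the quadratic gradient block $|\nabla v|^2 I - \nabla v\otimes\nabla v$ in \eqref{Aij}, which at this order involve $c_1^2$, $\partial c_1$, and $c_2$ linearly. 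Since $v = -\log d + c_1 d + c_2 d^2 + \cdots$, one has $\partial_d v = -1/d + c_1 + 2c_2 d + \cdots$ and $\partial_d^2 v = 1/d^2 + 2c_2 + \cdots$, so the singular parts are what produce the leading $(n-1)$ in $\widetilde A_{nn}$ and $\widetilde A_{aa}$, while $c_2$ first appears multiplying $d^2$.

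Next I would expand $\sigma_k(\lambda(\widetilde A(v)))$ around the diagonal matrix $\mathrm{diag}(n-1,\dots,n-1)$ whose $\sigma_k$ is $(n-1)^k C_n^k$. Writing $\widetilde A = (n-1)I + d\,B_1 + d^2 B_2 + O(d^3)$ with $B_1, B_2$ read off from the refined \eqref{Atil}, the first-order variation gives $\sigma_k' $ contracted with $B_1$ — this is how \eqref{c1} was fixed — and the $d^2$-coefficient is $\langle \partial\sigma_k, B_2\rangle + \tfrac12\langle \partial^2\sigma_k (B_1,B_1)\rangle$. Because the background is the scalar matrix $(n-1)I$, the derivatives of $\sigma_k$ at that point are completely explicit: $\partial\sigma_k/\partial A_{ii} = C_{n-1}^{k-1}(n-1)^{k-1}$, and the Hessian splits into a "same-index" piece proportional to $C_{n-2}^{k-2}(n-1)^{k-2}$ and a "different-index" piece with the same constant, so the whole $d^2$-coefficient becomes an explicit linear combination of $H_{\partial\Omega}^2$, $|\Pi|^2$, $\partial_a^2$-type terms, and $c_2$ — with the coefficient of $c_2$ equal to $(n-1)(n+1)$ up to the normalization, matching the $(i-n)(i+1)|_{i=2} = -2(n-1)\cdot\frac{?}{}$ factor in \eqref{ci} after accounting for the RHS term.

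Then I would do the same expansion for $d^{2k}(n-1)^k C_n^k e^{2kv}$: since $e^{2kv} = d^{-2k} e^{2k(c_1 d + c_2 d^2 + \cdots)}$, the prefactor $d^{2k}$ cancels the singularity and $d^{2k}e^{2kv} = 1 + 2k c_1 d + (2k c_2 + 2k^2 c_1^2) d^2 + \cdots$, so the $d^2$-coefficient of this term contributes $-(n-1)^k C_n^k(2k c_2 + 2k^2 c_1^2)$. Collecting the $d^2$-coefficients from both sides, setting the sum to zero, substituting $c_1 = H_{\partial\Omega}/(2(n-1))$ from \eqref{c1}, and solving the resulting linear equation for $c_2$ yields \eqref{c2}; the splitting of $|\Pi|^2$ into $|\overset{\circ}{\Pi}|^2$ and the trace part via $|\Pi|^2 = |\overset{\circ}{\Pi}|^2 + \tfrac1n H_{\partial\Omega}^2$ is what isolates the single $k$-dependent summand, since the $k$-dependence enters only through $\partial^2\sigma_k$ on the trace-free part of $B_1$. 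The main obstacle is purely computational bookkeeping: one must get the $O(d^2)$ terms of both the induced metric (hence of $\nabla^2 v$ in the curved principal coordinates — Christoffel symbols of order $d$ contribute) and of $|\nabla v|^2$ correct, and then correctly assemble the $\partial^2\sigma_k$ contraction; there is no conceptual difficulty beyond \eqref{ci}, but the constants are unforgiving, and the appearance of the precise rational functions of $n$ in \eqref{c2} is the sanity check that the expansions of $d$ and of the metric were carried out to sufficient order.
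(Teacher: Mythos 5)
Your plan --- refine \eqref{Atil} to order $d^2$, Taylor-expand $\sigma_k$ to second order about the scalar matrix $(n-1)I$, expand $d^{2k}e^{2kv}$, and read off $c_2$ from the vanishing of the $d^2$-coefficient --- is precisely the computation the paper compresses into the line ``by \eqref{ci} and \eqref{c1}'', after which the paper's proof names the resulting quadratic sum $D$ and simplifies algebraically; so your route is essentially the same as the paper's, just with the $\sigma_k$-Taylor bookkeeping spelled out. One identity you must correct to actually land on \eqref{c2}: since $\partial\Omega$ is $(n-1)$-dimensional, the trace-free second fundamental form satisfies $|\overset{\circ}{\Pi}|^2=\sum_a(\kappa_a-\tfrac{1}{n-1}H_{\partial\Omega})^2=|\Pi|^2-\tfrac{1}{n-1}H_{\partial\Omega}^2$ (this is the identity the paper's proof uses), not $|\Pi|^2=|\overset{\circ}{\Pi}|^2+\tfrac{1}{n}H_{\partial\Omega}^2$ as you wrote; the $\tfrac{1}{n}$ normalization appears to be a typo in the statement's displayed definition of $\overset{\circ}{\Pi}$, and carrying it into your final trace-split step would skew the coefficient of the $k$-linear term.
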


We note that $c_2$ can be expressed as the sum of two parts,
the first part independent of $k$ and the second part a conformal invariant multiplied by $k$.

\begin{proof}
By \eqref{ci} and \eqref{c1}, we have
$$\frac{6}{n}(n-2)c_2=I+\widehat{I},$$ where
\begin{align*}
    I&=
    -\frac{1}{(n-1)^3n}\bigg\{\sum_{a<b}(\frac{1}{n-1}H_{\partial\Omega}+(n-2)\kappa_a)(\frac{1}{n-1}H_{\partial\Omega}\\
    &\qquad\qquad\quad+(n-2)\kappa_b)+(n-1)H_{\partial\Omega}^2\bigg\}
+\frac{-3n+2}{4n(n-1)^2}H_{\partial\Omega}^2+\frac{2}{n}|\Pi|^2,\\
\widehat{I}=&\frac{k}{(n-1)^3n}\bigg\{\sum_{a<b}(\frac{1}{n-1}H_{\partial\Omega}+(n-2)\kappa_a)(\frac{1}{n-1}H_{\partial\Omega}\\
&\qquad\qquad\quad+(n-2)\kappa_b)+\big((n-1)-\frac{(n-1)n}{2}\big)H_{\partial\Omega}^2\bigg\}.
\end{align*}
Set
$$D=\sum_{a<b}(\frac{1}{n-1}H_{\partial\Omega}+(n-2)\kappa_a)(\frac{1}{n-1}H_{\partial\Omega}+(n-2)\kappa_b)+(n-1)H_{\partial\Omega}^2.$$
Then,
\begin{align*}D&=\frac{n^3-3n-n^2+4}{2(n-1)}H_{\partial\Omega}^2-\frac{(n-2)^2}{2}|\Pi|^2,\\
\big|\overset{\circ}{\Pi}\big|^2&=\sum_a(\kappa_a-\frac{1}{n-1}H_{\partial\Omega})^2=|\Pi|^2-\frac{1}{n-1}H_{\partial\Omega}^2,\end{align*}
and
$$D-\frac{(n-1)n}{2}H_{\partial\Omega}^2=-\frac{(n-2)^2}{2}(|\Pi|^2-\frac{1}{n-1}H_{\partial\Omega}^2)=-\frac{(n-2)^2}{2}\big|\overset{\circ}{\Pi}\big|^2.$$
Hence, we have \eqref{c2}.
\end{proof}

Before deriving boundary expansion for $u,$ we show a version of the maximum principle,
which will be of use to us.

\begin{theorem}\label{max-prin}
Let $\Omega$ be a bounded smooth domain in $\mathbb{R}^{n},$ for $n\ge 3.$
     Suppose $u$ and $v$ are smooth sub- and super-solutions, respectively, to \eqref{main}-\eqref{mainbdry} in $\Omega$ and $A(v)\in \Gamma_{k}^{+}.$ If $\lim_{x\rightarrow \partial \Omega}(u-v)\leq 0,$
      then $u\leq v$ in $\Omega.$
\end{theorem}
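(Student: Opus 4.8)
The plan is to run the standard elliptic comparison argument, but one must deal with two nonstandard features: the operator $F$ is only degenerate elliptic off the cone $\Gamma_k^+$, and the competing functions both blow up at $\partial\Omega$ (so the maximum of $u-v$ could a priori escape to the boundary). First I would reduce to the case of a strict supersolution. Replace $v$ by $v_\varepsilon = v + \varepsilon \psi$ for a suitable smooth, bounded, nonnegative function $\psi$ (for instance $\psi$ a function of the distance $d$, or simply $\psi\equiv \varphi$ built from a first eigenfunction-type barrier) chosen so that $A(v_\varepsilon)$ still lies in $\Gamma_k^+$ for small $\varepsilon$ and so that $F(v_\varepsilon) < 0$ strictly in $\Omega$; concavity of $\sigma_k^{1/k}$ on $\Gamma_k^+$ together with the strict monotonicity of the right-hand side $e^{2ku}$ makes this routine. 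Since $\psi$ is bounded, adding $\varepsilon\psi$ does not change the boundary behavior, so $\lim_{x\to\partial\Omega}(u - v_\varepsilon) \le \lim_{x\to\partial\Omega}(u-v) \le 0$ still holds. If I can show $u \le v_\varepsilon$ in $\Omega$ for every small $\varepsilon>0$, letting $\varepsilon\to 0$ gives $u\le v$.

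Next I would argue by contradiction: suppose $\sup_\Omega (u - v_\varepsilon) =: m > 0$. Because $u - v_\varepsilon \to u-v$ which has nonpositive limsup at $\partial\Omega$, and because $u-v_\varepsilon$ is continuous on the open set $\Omega$, the supremum $m$ is attained at an interior point $x_0\in\Omega\setminus\partial\Omega$ (the positive supremum cannot be approached along any sequence tending to the boundary). At $x_0$ we have $\nabla u(x_0) = \nabla v_\varepsilon(x_0)$ and $\nabla^2 u(x_0) \le \nabla^2 v_\varepsilon(x_0)$. From the definition \eqref{Aij} of $A(\cdot)$, the dependence of $A$ on $\nabla^2 u$ is through $(n-2)\nabla^2 u + (\Delta u) I$, which is a monotone (in the Loewner order) linear map, and the first-order terms of $A$ depend only on $\nabla u$; hence at $x_0$ one gets $A(u)(x_0) \le A(v_\varepsilon)(x_0)$ as symmetric matrices. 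Since $A(v_\varepsilon)(x_0)\in\Gamma_k^+$ and $\Gamma_k^+$ is an open convex cone invariant under adding nonnegative matrices, $A(u)(x_0)\in\Gamma_k^+$ as well, and the monotonicity of $\sigma_k$ along $\Gamma_k^+$ (Gårding) gives $\sigma_k(\lambda(A(u)))(x_0) \le \sigma_k(\lambda(A(v_\varepsilon)))(x_0)$.

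Finally I would combine this with the zeroth-order terms. Since $u(x_0) > v_\varepsilon(x_0)$ (as $m>0$), we have $e^{2ku(x_0)} > e^{2kv_\varepsilon(x_0)}$, so
\begin{align*}
0 = F(u)(x_0) &= \sigma_k(\lambda(A(u)))(x_0) - (n-1)^k C_n^k e^{2ku(x_0)}\\
&< \sigma_k(\lambda(A(v_\varepsilon)))(x_0) - (n-1)^k C_n^k e^{2kv_\varepsilon(x_0)} = F(v_\varepsilon)(x_0) < 0,
\end{align*}
a contradiction. Hence $m \le 0$, i.e. $u \le v_\varepsilon$, and then $u\le v$ after $\varepsilon\to 0$.

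The main obstacle is the first step: constructing the perturbation $v_\varepsilon$ that is simultaneously a strict supersolution, stays in $\Gamma_k^+$, and keeps the boundary asymptotics — one has to make sure the correction $\varepsilon\psi$ produces a negative definite contribution of the right size to $F$ without pushing $A(v_\varepsilon)$ out of the cone near $\partial\Omega$, where $A(v)$ may be close to the cone's boundary; here one exploits that, by the expansions of the previous lemmas, $A(v)$ is in fact comfortably inside $\Gamma_k^+$ near the boundary, and the concavity of $\sigma_k^{1/k}$ gives the needed quantitative room. Once strictness is arranged, the interior-maximum and Gårding-monotonicity steps are routine.
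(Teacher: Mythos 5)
Your overall strategy matches the paper's: reduce to an interior maximum of the difference, exploit the order $A(u)(x_0)\le A(v)(x_0)$ coming from the first- and second-derivative tests, invoke the monotonicity of $\sigma_k$ with respect to the matrix order on $\Gamma_k^+$, and close the argument with the strict monotonicity of the zeroth-order term $e^{2ku}$. Where you depart from the paper is in how you obtain strictness. The paper simply subtracts the constant $C=\max_\Omega(u-v)>0$ from $u$: since $A(\cdot)$ depends only on derivatives, $A(u-C)=A(u)$, while $e^{2k(u-C)}<e^{2ku}$, so $u-C$ is automatically a \emph{strict} subsolution, touches $v$ from below at the maximum point, and the zeroth-order terms coincide there; no auxiliary barrier is needed. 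Your route instead perturbs $v$ to $v_\varepsilon=v+\varepsilon\psi$ and you explicitly flag the construction of $\psi$ as the ``main obstacle'' without carrying it out. That is a real gap as written, but it is also an unnecessary one: at the interior maximum you already have $u(x_0)>v(x_0)$, hence $e^{2ku(x_0)}>e^{2kv(x_0)}$ strictly, and combined with $\sigma_k(A(u))(x_0)\le\sigma_k(A(v))(x_0)$ and $F(u)(x_0)\ge 0\ge F(v)(x_0)$ this already yields the contradiction with $\varepsilon=0$. So the cleanest repair of your argument is to delete the perturbation entirely.

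One further small slip: you assert that $A(u)(x_0)\in\Gamma_k^+$ because ``$\Gamma_k^+$ is invariant under adding nonnegative matrices,'' but here $A(u)(x_0)\le A(v_\varepsilon)(x_0)$, i.e.\ you are \emph{subtracting} a nonnegative matrix, so this reasoning goes the wrong way and $A(u)(x_0)$ need not lie in $\Gamma_k^+$. Fortunately, the monotonicity you need is exactly the content of Lemma 3.1 in Gursky--Viaclovsky, which only requires the larger matrix to be in $\Gamma_k^+$: if $A\le B$ and $B\in\Gamma_k^+$, then $\sigma_k(A)\le\sigma_k(B)$. Cite that lemma directly (as the paper does) rather than the cone-invariance heuristic, and the step is sound.
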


\begin{proof}
Suppose that $u>v$ somewhere in $\Omega$. Let $C$ be the maximum of $u-v$, which is attained at some point $x_0\in\Omega.$ Then, $w=u-C$ is a strict sub-solution to (\ref{main1}). Hence at the point $x_0,$ we have
$w(x_0)=v(x_0)$ and
$F(w)(x_0)>F(v)(x_0).$
Then,
\begin{align*}
\sigma_k(\lambda(A(w)))(x_0)>\sigma_k(\lambda(A(v)))(x_0).
\end{align*}
However, $v\geq w$ near $x_0.$ Therefore, we have
$dw(x_0)=dv(x_0)$ and $(v-w)_{ij}(x_0)\geq 0,$
and hence
  $A(w)(x_0)\leq A(v)(x_0).$
We use Lemma 3.1 in \cite{M.Gursky2} and then obtain
\begin{align*}
  \sigma_k(\lambda(A(w)))(x_0)\leq\sigma_k(\lambda(A(v)))(x_0).
\end{align*}
This leads to a contradiction.
\end{proof}

According to Theorem 1.4 in \cite{M.Gursky1}, the solution $u$ to \eqref{main}-\eqref{mainbdry} has the decay estimate \eqref{G-expansion}.
Now, we prove that the decay rate is actually $O(d)$.

\begin{lemma}\label{e-d1} Assume that $\Omega$ is a bounded smooth domain in $\mathbb{R}^{n},$ for $n\ge 3$
     and that $u$ is the solution of  \eqref{main}-\eqref{mainbdry}. Then,
 \begin{align}\label{const-term}
   |u+\log d|\leq C d    \quad\text{in }D_{\delta_1},
 \end{align}
 where $C$ and $\delta_1$ are positive constants depending only on $\Omega,$ $n$ and $k.$
 \end{lemma}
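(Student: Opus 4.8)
The strategy is a barrier/comparison argument built on Theorem \ref{max-prin}. Starting from the known estimate \eqref{G-expansion}, namely $u + \log d \to 0$ as $x \to \partial\Omega$, I want to upgrade this qualitative statement to the linear decay rate $|u + \log d| \le Cd$. The natural candidates for barriers are the functions $v_\pm = -\log d \pm Ad$ for a large constant $A$ to be chosen, restricted to a one-sided neighborhood $D_\delta$ of $\partial\Omega$; alternatively, to make the construction self-contained I may use the truncated formal solution $v = -\log d + c_1 d$ from \eqref{formal-solution} and perturb it by $\pm Ad^{1+\tau}$ or $\pm A d^{2}$-type terms — but the cleanest version just uses $-\log d \pm Ad$ directly.

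First I would compute $\widetilde F(-\log d + c d)$ for a generic constant (or slowly varying function) $c$ near the boundary, using the expansion \eqref{Atil} for $\widetilde A_{ij}$: one finds that $\widetilde A_{ij}(-\log d + cd) = (n-1)\delta_{ij} + O(d)$, so $\lambda(\widetilde A)$ lies well inside $\Gamma_k^+$ for $d$ small, and $\widetilde F(-\log d + cd) = \sigma_k(\lambda(\widetilde A)) - d^{2k}(n-1)^k C_n^k e^{2k(-\log d + cd)} = \sigma_k(\lambda(\widetilde A)) - (n-1)^k C_n^k e^{2kcd}$. Since $\sigma_k$ of the eigenvalues of $(n-1)I + O(d)$ equals $(n-1)^k C_n^k + O(d)$ and $e^{2kcd} = 1 + O(d)$, the leading terms cancel and $\widetilde F(-\log d + cd) = O(d)$, where the implied constant is uniform for $|c| \le$ some fixed bound. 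The key point is the sign of the $O(d)$ term as a function of the coefficient $c$: differentiating in $c$, the term $-(n-1)^k C_n^k e^{2kcd}$ contributes $-2kc(n-1)^k C_n^k d + o(d)$, which is monotone in $c$, while the $\sigma_k(\lambda(\widetilde A))$ term contributes a $c$-dependence of the form $-2(n-2)c \cdot \sigma_1(\text{cofactor}) d + \cdots$; so for $c$ sufficiently positive $\widetilde F < 0$ near $\partial\Omega$ (making $-\log d + cd$ a supersolution) and for $c$ sufficiently negative $\widetilde F > 0$ (a subsolution), provided $d \le \delta_1$ with $\delta_1$ depending only on $\Omega, n, k$. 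I should double-check that for $k=1$, where $F$ is linear in $A$, this sign analysis is transparent: $\sigma_1(A(u)) = (2n-2)\Delta u + (n-1)(n-2)|\nabla u|^2$, and the computation is elementary.

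Next I apply Theorem \ref{max-prin} on the domain $D_{\delta_1}$. The boundary of $D_{\delta_1}$ consists of $\partial\Omega$, where $u + \log d \to 0 = v_\pm + \log d$, and the inner hypersurface $\{d = \delta_1\}$, where $u$ is a finite smooth function and we can absorb the discrepancy $u - v_\pm$ by enlarging the constant $A$ in $v_\pm = -\log d \pm Ad$ (increasing $A$ increases $v_+$ and decreases $v_-$ on $\{d = \delta_1\}$, since there $d = \delta_1 > 0$ is fixed, so for $A$ large enough $v_+ \ge u \ge v_-$ on that inner face); this keeps $v_+$ a supersolution and $v_-$ a subsolution as long as we only made $A$ larger. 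One subtlety: Theorem \ref{max-prin} as stated is for $\Omega$ with the boundary condition $u = \infty$ on all of $\partial\Omega$; here I apply it on $D_{\delta_1}$ whose boundary has the finite part $\{d=\delta_1\}$, so I need the comparison principle with mixed finite/infinite boundary data — but the proof of Theorem \ref{max-prin} is purely a strict-maximum-principle argument at an interior maximum of $u - v$, so it goes through verbatim once $\limsup_{x\to\partial D_{\delta_1}}(u-v_+)\le 0$ and $\liminf(u - v_-)\ge 0$, which we have arranged. Then $v_- \le u \le v_+$ in $D_{\delta_1}$, i.e. $|u + \log d| \le Ad$, which is \eqref{const-term}.

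The main obstacle is the sign computation in the middle step: verifying that the $O(d)$ remainder in $\widetilde F(-\log d + cd)$ genuinely has the favorable sign (supersolution for $c \gg 0$, subsolution for $c \ll 0$) uniformly over $D_{\delta_1}$, with all constants controlled by $\Omega, n, k$. This requires expanding $\sigma_k(\lambda(\widetilde A))$ to first order in $d$ using \eqref{Atil} — the $\widetilde A_{aa}$ entries pick up $-2(n-2)c\,d$ from the $-2(n-2)c_1 d$ pattern (with $c_1$ replaced by the generic $c$), and one must check that $\tfrac{\partial}{\partial c}\,\widetilde F$ at $d$-order $1$ is a fixed negative multiple of $d$ — i.e. that both the $\sigma_k$ term and the exponential term push $\widetilde F$ downward as $c$ increases. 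Since $\sigma_k$ is increasing on $\Gamma_k^+$ and the diagonal perturbation $-2(n-2)c\,d\,I_{(n-1)\times(n-1)}$ (in the tangential block) is a negative multiple of a positive-definite matrix when $c>0$, this monotonicity is structurally guaranteed — but writing it out cleanly, and confirming no cancellation occurs against the exponential term (which also decreases), is where the real work lies.
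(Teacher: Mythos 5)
Your barrier-and-comparison strategy is the same as the paper's, and your identification of the linear-in-$d$ sign computation as the heart of the matter is correct. However, there is a genuine gap in the way you propose to choose constants, and it is exactly the part that the paper handles with some care.

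You propose first to fix $\delta_1$ so that $\widetilde F(-\log d + cd)$ has the favorable sign for all $|c|$ up to ``some fixed bound,'' and then to enlarge $A$ as much as needed so that $v_+ \ge u$ on the inner face $\{d = \delta_1\}$, asserting that ``this keeps $v_+$ a supersolution \dots as long as we only made $A$ larger.'' That assertion is unjustified. Your verification that $\widetilde F(v_+) < 0$ is a first-order expansion in $d$ whose error terms contain $(Ad)^2$ and higher; it is valid only in the regime where $Ad$ is small. If $\delta_1$ is fixed first and $A$ is then taken as large as necessary, the product $A\delta_1$ is uncontrolled, the matrix $\widetilde A(v_+)$ is no longer a small perturbation of $(n-1)I$ near the inner face, and the favorable sign of $\widetilde F(v_+)$ no longer follows from the computation you describe. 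Conversely, if you re-shrink $\delta_1$ after enlarging $A$, then $A$ must be re-enlarged, and the choice is circular. You have two requirements ($A\delta_1$ small for the supersolution property, $A\delta_1$ large enough to beat $|u - v_+|$ on $\{d=\delta_1\}$) pulling against each other, and nothing in your plan reconciles them.

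The missing ingredient is a \emph{quantitative} use of \eqref{G-expansion}, not just its qualitative boundary limit. The paper first picks $\varepsilon > 0$ small (depending only on $n,k$) and $\delta_0$ so that $|u + \log d| \le \varepsilon$ in $D_{\delta_0}$. It then takes $\delta_1 \le \delta_0$ and defines $C = \varepsilon/\delta_1$. With this coupled choice, on the inner face $u - (-\log d + Cd) = (u + \log d) - C\delta_1 \le \varepsilon - \varepsilon = 0$, so the boundary comparison holds, and \emph{simultaneously} $Cd \le \varepsilon$ throughout $D_{\delta_1}$, so the error terms in $F(\phi)$ are $O(\varepsilon C d)$ and are dominated by the negative $-\mathrm{const}\cdot Cd$ term once $\varepsilon$ is small. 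It is this balanced choice $A = \varepsilon/\delta_1$, rather than ``take $A$ as large as needed,'' that makes the argument close. Without this, your proof, as written, does not establish \eqref{const-term}.
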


\begin{proof}
By (\ref{G-expansion}), we can take a small positive constant $\varepsilon$ to be determined and then a small enough positive constant $\delta_0$ depending on $\varepsilon$ such that
    \begin{align}\label{1}
   |u+\log d|\leq \varepsilon\quad\text{in }D_{\delta_0}.
 \end{align}
 Set
 $$\phi=-\log d+Cd.$$
    Take a small positive constant $\delta_1<\delta_0$ to be determined and set
    \begin{align}\label{Cd1}
        C= \frac{\varepsilon}{\delta_1}.
    \end{align}
    Then by (\ref{1}), (\ref{Cd1}) and (\ref{G-expansion}), we have
    \begin{align*}
        u\leq -\log d+Cd \quad\text{on } \partial D_{\delta_1},
    \end{align*}
    and
    \begin{align}\label{epsilon}
        Cd\leq\varepsilon \quad\text{in } D_{\delta_1}.
    \end{align}
   We use principle coordinates in $ D_{\delta_1}.$
    By (\ref{Atil}) and (\ref{epsilon}), we have, when $\varepsilon\ll1$ and $\delta_1$ are small,
  \begin{align*}
    F(\phi)&=\frac{1}{d^{2k}}\sigma_k(\lambda(d^{2}A(\phi)))-d^{2k}(n-1)^kC_n^ke^{2k\phi})\\
    &=\frac{1}{d^{2k}}\{(n-1)^{k-1}C_{n-1}^{k-1}[(2(n-1))H_{\partial\Omega}d-2(n-2)(n-1)Cd]\\
    &\qquad-(n-1)^{k}C_{n}^{k}2kCd+O(\varepsilon C d)\}.
  \end{align*}
  Hence, when $\delta_{1}$ and $\varepsilon$ are small enough and thus $C$ big enough, we have $F(\phi)<0$ in $D_{\delta_1}.$ Here, by the definition of $C$ in \eqref{Cd1}, we know that the choices of $\delta_1$ and $\varepsilon$ are independent.
  On the other hand,
  by (\ref{Atil}) and (\ref{epsilon}), when $\delta_1$ and $\varepsilon$ are small enough, we have
  \begin{align*}
        \sigma_k(\lambda(A(\phi)))&=\frac{1}{d^{2k}}(\sigma_k(\lambda(d^{2}A(\phi)))>0.
    \end{align*}
Obviously, $A(\phi)\in \Gamma_k^+.$ Therefore, by using Theorem \ref{max-prin}, we have $u\leq\phi=-\log d+Cd$ in $D_{\delta_1}.$ Similarly, we can prove $u\geq-\log d-Cd$ in $D_{\delta_1}.$
\end{proof}

Now, we can derive the boundary expansion for $u$ involving all local terms.

\begin{proof}[Proof of Theorem \ref{expansion}]
Take $\delta_2$ small to be determined such that $\delta_2\leq\delta_1$, where $\delta_1$ is as in Lemma \ref{e-d1}. Consider in $D_{\delta_2}.$ For any fixed $\varepsilon\in(0,1)$, set
\begin{align}\label{defA}
\begin{split}
     A&=2C\delta_2^{1-n},
     \\q&=n+\varepsilon,
\end{split}
\end{align}
where $C$ is a large enough constant depending on the constant in (\ref{const-term}) and $\partial\Omega,n,k.$ By the definition of $A,$ when $\delta_1$ is small,
\begin{align*}
   Ad^n-Ad^q \geq\frac{A}{2}d^n\geq Cd \quad \text{on }\partial D_{\delta_2},
\end{align*}
and
\begin{align}\label{mu}
    Ad^n\leq 2C\delta_2 \quad \text{in } D_{\delta_2}.
\end{align}
Hence, for a positive constant $\mu\ll1$ to be determined, we can choose $\delta_2$ small such that
\begin{align}\label{Adn}
    Ad^n\leq \mu.
\end{align}
Next, set $$\varphi=Ad^n-Ad^q ,$$ and
  \begin{align*}
  \overline{v}&=v+\varphi,\\
  \underline{v}&=v-\varphi.
  \end{align*}
where $c_i,$ $\cdots$, $c_{n-1},$ and $c_{n,1}$ are the functions on $\partial \Omega$
and $v$ is defined in \eqref{formal-solution}.
Then by \eqref{dn} and \eqref{Adn}, a straightforward calculation yields, in $ D_{\delta_{2}},$
\begin{align}
\widetilde{F}(\overline{v})=
-C_{n-1}^{k-1}(n-1)^k(2\varepsilon)(n+1+\varepsilon)Ad^{n+\varepsilon}+O(\mu Ad^{n+\varepsilon}).
\end{align}
Choose $\delta_2$ small enough and thus $\mu$ small by \eqref{mu} and $A$ large by (\ref{defA}). Then,
$\widetilde{F}(\overline{v})<0$ and therefore $F(\overline{v})=\frac{1}{d^{2k}}\widetilde{F}(\overline{v})<0$ in $D_{\delta_{2}}.$
Next by (\ref{Adn}),
we have
   $ A(\overline{v})\in \Gamma_k^+$, if $\delta_{2}$ is small.
By the maximum principle Theorem \ref{max-prin},  $u\leq \overline{v}$ in $D_{\delta_{2}}.$
Similarly, we have $u\geq \underline{v}$ in $D_{\delta_{2}}.$
Hence, we have the desired result.
\end{proof}

Next, we prove the conformal invariance of the difference of the coefficients in expansions
as described in Proposition \ref{conformal-diff}.

\begin{proof}[Proof of Proposition \ref{conformal-diff}]
For $g=e^{2\rho}g_0,$ as discussed in Section 1,
 \begin{align}\label{M.G1}
    \sigma_k[-g^{-1}e^{-2\widetilde{w}_k}Ric(e^{2\widetilde{w}_k}g)]=\beta_{k,n}
\end{align} is equivalent to \begin{align}
    \sigma_k[-g_0^{-1}e^{-2(\rho+\widetilde{w}_k)}Ric(e^{2(\rho+\widetilde{w}_k)}g_0)]=\beta_{k,n}.
\end{align}
Hence, $w_k=\rho+\widetilde{w}_k.$ Assume that $\rho$ has the expansion
$$\rho=\rho_0+\rho_1d+\cdot\cdot\cdot+\rho_{n-1}d^{n-1}+O(d^n),$$
where $\rho_1, \rho_2, \cdots$ are functions on $\partial\Omega$.
By the expansion in Theorem \ref{expansion}, we have,
for $j=1,\cdot\cdot\cdot,n-1,$
\begin{align*}
 \widetilde{c}_j^k-\widetilde{c}_j^1=(c_j^k-\rho_j)- (c_j^1-\rho_j)= c_j^k-c_j^1.
\end{align*}
This is the desired result.
\end{proof}

\section{The $C^1$-Estimates}

In this section, we prove $C^1$ estimate for $u+\log d$ in a sufficiently small neighborhood of $\partial \Omega$ where $u$ is the solution to \eqref{main}-\eqref{mainbdry}.

\begin{lemma}\label{nabla-const}
Assume that $\Omega$ is a bounded smooth domain in $\mathbb{R}^{n},$ for $n\ge 3$,
     and that $u$ is the solution of  \eqref{main}-\eqref{mainbdry}. Then
\begin{align*}
    |\nabla(u+\log d -c_1d)|\leq C\quad\text{in }D_{\delta_3},
\end{align*}
 where $c_1$ is the function in \eqref{c1}, and $C$ and $\delta_3$ are positive constants depending only on $\Omega,$ $n$ and $k$.
\end{lemma}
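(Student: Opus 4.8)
The plan is to establish the gradient bound by a barrier argument combined with interior gradient estimates, exploiting the expansion from Theorem \ref{expansion}. First I would introduce the function $h = u + \log d - c_1 d$, which by \eqref{e-d2} satisfies $|h| \leq C d^{\min(2,\ldots)}$; more precisely, from Theorem \ref{expansion} we know $|u + \log d - c_1 d - \cdots| \le C d^n \log(1/d)$ near $\partial\Omega$, so in particular $|h| \le C d$ in $D_{\delta_2}$ (actually $|h| \le C d^2$ when $n \ge 3$, but $O(d)$ suffices and is robust). The goal is to upgrade this $C^0$ bound on $h$ to a $C^1$ bound, i.e. $|\nabla h| \le C$.

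The key idea is a rescaling/interior-estimate argument. Fix a point $x_0 \in D_{\delta_3}$ with $d(x_0) = \rho$ small. Work on the ball $B_{\rho/2}(x_0)$, on which $d(x) \sim \rho$. Rescale by setting $y = (x - x_0)/\rho$ and $\hat u(y) = u(x_0 + \rho y) + \log \rho$; the equation \eqref{main} is conformally covariant in a way that makes $\hat u$ solve an equation of the same type $\sigma_k(A(\hat u)) = (n-1)^k C_n^k e^{2k\hat u}$ on $B_{1/2}(0)$, because the factor $\log\rho$ shift corresponds to multiplying the metric by $\rho^{-2}$, and $A(\hat u)$ transforms accordingly (the matrix $A$ in \eqref{Aij} is built to be conformally natural). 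On $B_{1/2}(0)$ the rescaled solution $\hat u$ is bounded: from $|u + \log d| \le C d$ (Lemma \ref{e-d1}) we get $|\hat u(y)| = |u(x_0+\rho y) + \log\rho| = |u + \log d + \log(\rho/d)| \le C\rho + C|\log(\rho/d)| \le C$ since $d(x_0 + \rho y) \in [\rho/2, 3\rho/2]$. Now apply the interior gradient estimate for $\sigma_k$-equations of this type (a Bernstein-type estimate, available since $A(\hat u) \in \Gamma_k^+$ and the equation is uniformly elliptic on bounded solutions in the Gårding cone): $|\nabla_y \hat u(0)| \le C$. Unscaling, $|\nabla u(x_0)| \le C/\rho = C/d(x_0)$, hence $|\nabla(u + \log d)(x_0)| = |\nabla u + \nabla d / d| \le C/d + |\nabla d|/d$, which only gives $|\nabla(u+\log d)| \le C/d$ — not good enough directly. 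So the refinement must use the improved bound $|h| = |u + \log d - c_1 d| \le C d^2$ rather than $|u + \log d| \le C d$: applying the interior Schauder/gradient estimate to $h$ itself (which satisfies a linearized equation with bounded coefficients after rescaling, with $\|h\|_{L^\infty(B_{\rho/2}(x_0))} \le C\rho^2$), the interior estimate for the rescaled $\hat h(y) = \rho^{-2} h(x_0 + \rho y)$ on $B_{1/2}$, namely $\|\hat h\|_{C^1(B_{1/4})} \le C\|\hat h\|_{C^0(B_{1/2})} \le C$, unscales to $|\nabla h(x_0)| \le C \rho^{-2} \cdot \rho = C\rho \cdot \rho^{-1}\cdot\ldots$ — one gets $|\nabla h(x_0)| \le C$. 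That is the bound claimed.

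The main obstacle I anticipate is making the linearization rigorous: $h$ does not solve $\sigma_k(A(h)) = \text{something}$, so one must write $F(u) - F(v) = 0$ where $v$ is the formal solution from the Lemma, expand $F$ to first order in $h = u - v$ (using that $A(u), A(v) \in \Gamma_k^+$ so the linearized operator $L = \sum F^{ij}_\lambda \partial^2_{ij} + (\text{lower order})$ is uniformly elliptic on the relevant set), and control the inhomogeneous term, which by \eqref{dn} is $O(d^{n+\tau})$ after multiplying by $d^{2k}$ — this is lower order and harmless. The coefficients of $L$ involve $\nabla^2 v$ and $\nabla v$, which blow up like $d^{-2}$ and $d^{-1}$, so the ellipticity is not uniform on $D_{\delta_3}$ itself; this is precisely why the rescaling to unit scale is essential, and why one must verify that after rescaling the coefficients of the linearized operator are uniformly bounded and uniformly elliptic on $B_{1/2}(0)$, independent of $x_0$ and $\rho$. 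Once that uniformity is in hand, standard interior elliptic estimates (e.g. De Giorgi–Nash–Moser for the gradient bound, or Krylov–Safonov plus Schauder) close the argument, and the $\alpha = 1/2$ versus $\alpha = 1$ dichotomy in Theorem \ref{nabla} (a consequence, not this lemma) reflects how much room the expansion \eqref{e-d2} leaves in low dimensions.
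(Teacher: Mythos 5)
Your proposal is a genuinely different strategy from the paper's. The paper follows Gursky--Viaclovsky: it sets $w = u + \log\psi - \widetilde{c_1}\psi$ (with $\psi$ a global extension of $d$), uses Theorem~\ref{expansion} plus a comparison argument with the interior solutions $u_{j_i}$ to get $|w/\psi^2| \le C_0$ globally, and then runs a maximum-principle test-function argument on $h = (1+\tfrac12|\nabla w|^2)\,e^{\phi(w/\psi^2)}$ with $\phi(s)=\frac{1}{p^2(3C_0)^p}(2C_0+s)^p$. At an interior maximum of $h$, differentiating $h$ twice and using the Newton transformation $T_{k-1}$, its divergence-free property, the positivity of $Q_{ij}=T_{ij}+\tfrac{1}{n-2}T_{ll}\delta_{ij}$, and the $\sigma_k/\sigma_{k-1}$ inequalities forces $|\nabla w|(x_0)\le C$. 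No rescaling, linearization, Schauder or Krylov--Safonov is used: the structure of $\sigma_k$ (via Newton transformations) does all the work.

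Your rescaling is correctly set up at the nonlinear level: $\hat u(y)=u(x_0+\rho y)+\log\rho$ does solve the same $\sigma_k$ equation on $B_{1/2}(0)$, and $|\hat u|\le C$ by Lemma~\ref{e-d1}. But the argument has a real gap at the point where you invoke ``interior gradient estimates.'' First, the interior Bernstein-type gradient estimate for $\sigma_k$-type Hessian equations under a mere $L^\infty$ bound is not a freely citable black box: the standard proof of such an estimate is precisely the Gursky--Viaclovsky test-function argument the paper uses, so your reduction ``rescale, then apply the interior gradient estimate'' pushes the actual work into the cited step. Second, your linearization route for $h=u-v$ is circular: De~Giorgi--Nash--Moser and Krylov--Safonov upgrade $C^0$ to $C^\alpha$, not to $C^1$, and interior Schauder requires the coefficients $a^{ij}=\partial\sigma_k/\partial A_{ij}$ (evaluated along a segment from $A(v)$ to $A(u)$) to be $C^\alpha$, which in turn requires $C^{2,\alpha}$ control of $u$---exactly what you do not yet have. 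Third, even the claim of \emph{uniform} ellipticity of the rescaled equation is delicate: the quantitative ellipticity constants for $\sigma_k$ depend on an upper bound for $|A(\hat u)|$, i.e.\ on $|D^2\hat u|$, again circular. So the proposal is a reasonable sketch of a different approach, but as written it does not close; to make it rigorous you would essentially have to prove a local version of the gradient estimate that the paper proves globally by the test-function method.
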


\begin{proof}
Take $\delta_2$ as the constant in Theorem \ref{expansion} and $\widetilde{c_1},\,\psi\in C^\infty( \Omega)$ satisfying
\begin{align*} \widetilde{c_1}=c_1,\,\psi= d\quad\text{in}\quad D_{\frac{1}{2}\delta_2},\end{align*}
and
\begin{align*}
  \psi\geq \frac{1}{2}\delta_2 ,\quad\text{in } \Omega\setminus D_{\frac{1}{2}\delta_2},
\end{align*}
where $c_1$ is the function as given in \eqref{c1}.
Set
$$w=u+\log \psi -\widetilde{c_1}\psi.$$
We will prove for some $C_0>1,$
\begin{align}\label{w/d2}
 |\frac{w}{\psi^2}|\leq C_0\quad\text{in } \Omega.
\end{align}
First, by Theorem \ref{expansion}, we know (\ref{w/d2}) holds in $D_{\delta_2/2}.$
Next, take
$$j_1=-\log(\frac{1}{2}\delta_2)+C\delta_2,\quad j_2=-\log(\frac{1}{2}\delta_2)-C\delta_2.$$
By Remark 4.10 in \cite{M.Gursky1}, for $i=1,2$, respectively, we can solve
\begin{align}\label{sub-sup}
F(u_{j_i})&=0\quad\text{in}\quad\Omega\setminus D_{\frac{1}{2}\delta_2},\\
u_{j_i}&=j_i\quad\text{on}\quad\partial( \Omega\setminus D_{\frac{1}{2}\delta_2}).
\end{align}
By maximum principle and Lemma \ref{e-d1}, we obtain
 $u_{j_2}\le u\le u_{j_1}$  in $\Omega\setminus D_{\delta_2}/2$.
 Hence, (\ref{w/d2}) holds in $\Omega\setminus D_{\frac{1}{2}\delta_2}.$

We rewrite the equation  \eqref{main} as
\begin{align}\label{new-main}
\sigma_k(\psi^2(\overline{A}(w-\log \psi +\widetilde{c_1}\psi)))&=e^{2k\widetilde{c_1}\psi}(\frac{n-1}{n-2})^kC_n^ke^{2kw}\doteq e^{2k\widetilde{c_1}\psi}\beta_{n,k}e^{2kw}\quad \text{in } \Omega,
\end{align}
where
 \begin{align}\label{Aijbar}
 (\overline{A}(u))_{ij}=\partial_{ij}u+\frac{1}{n-2}\Delta u\delta_{ij}+|\nabla u|^2\delta_{ij}-\partial_{i}u\partial_{j}u.
\end{align}
We denote the $(k-1)$-Newton transformation associated with $\psi^2\overline{A}(w-\log \psi +\widetilde{c_1}\psi)$ as $T_{k-1}\doteq T$, which is positive since $\psi^2\overline{A}\in \Gamma_k^+.$ In particular, if $A^{i}_{j}$
are the components of a symmetric matrix $A$, then the $q$th Newton transformation
associated with $A$ is
$$T_q(A)^{i}_{j}=\frac{1}{q!}\delta^{i_1i_2...i_qi}_{j_1j_2...j_qj}A^{j_1}_{i_1}\cdots A^{j_q}_{i_q}.$$
Here $\delta^{i_1i_2...i_qi}_{j_1j_2...j_qj}$ is the generalized Kronecker delta symbol. We frequently use the following properties of $T_{k-1}(A)$:
 \begin{align}\label{property}\begin{split}
    T_{k-1}(A)_{ij}A_{ij}&=k\sigma_k(A);\\
    tr T_{k-1}(A)&=(n-k+1)\sigma_{k-1}(A);\\
    \partial_m(\sigma_k(A))&=T_{k-1}(A)_{ij} \partial_m(A_{ij}).\end{split}
\end{align}
Set $$Q_{ij}=T_{ij}+\frac{1}{n-2}T_{ll}\delta_{ij}.$$
There is a summation in $l.$ Then, $Q_{ij}$ is positive definite.
For the definition and properties of Newton transformation, we can refer to \cite{M.Gursky2}.
Set
$$\phi(s)=\frac{1}{p^2(3C_0)^p}(2C_0+ s)^p,$$
 for some $p$ large to be determined and $C_0$ as in (\ref{w/d2}). Then,
 \begin{align*}
   \frac{1}{p^2} \geq \phi(s)>0\quad\text{for any } s \in[-C_0,C_0].
\end{align*}
Set
\begin{align*}
    h=(1+\frac{|\nabla w|^2}{2})e^{\phi( \frac{w}{\psi^2})}\doteq ve^{\phi( \frac{w}{\psi^2})}.
\end{align*}
We will prove,  for some constant $C,$
$$|h|_{L^{\infty}(\Omega)}\leq C.$$ This implies the desired result.

 First, for any point $x_0\in\partial \Omega,$ take the principal coordinates $(x',d)$ at $x_0$ with the unit inner normal vector $\nu$ in the $x_n$-direction. By Theorem \ref{expansion}, we know $w\equiv 0$ on $\partial \Omega$ and $w\le Cd^2$ in $D_{\delta_2}.$ Hence, $\nabla_{x'}w\equiv0$ on $\partial\Omega$ and
\begin{align*}
   | \frac{\partial w}{\partial\nu}(x_0)|=|\lim _{d\rightarrow 0}\frac{w(x_0',d)-0}{d-0}|=0.
\end{align*} Hence, $\nabla w(x_0)=0$, implying $|h(x_0)|\leq C.$

Thus, without loss of generality, we can assume that the maximum of $h$ attains at a point $x_0\in\Omega.$ The proof is inspired by \cite{M.Gursky2}. Assume $|\nabla w(x_0)|$ is sufficiently large. Otherwise the conclusion is immediate. All the calculation below is at the point $x_0.$ For brevity, we write
$$s=\frac{w}{\psi^2}.$$
Differentiate $h$ twice. Since $Q_{ij}$ is positive definite, we have
$$h_i=0,\quad
Q_{ij}h_{ij}\frac{\psi^4}{ve^\phi}\le 0.$$ Hence,
\begin{align}\label{wliwl0}
w_{li}w_l=-v\phi'(s)(\frac{w}{\psi^2})_i,
\end{align} and
\begin{align}\label{hijqij}\begin{split}
 \frac{\psi^4}{v} Q_{ij} w_{lij}w_l+(\phi''( s)-\big(\phi' (s)\big)^2) Q_{ij}(\frac{w}{\psi^2})_i(\frac{w}{\psi^2})_j\psi^4 +\phi' (s) Q_{ij}(\frac{w}{\psi^2})_{ij}\psi^4\le 0.\end{split}
\end{align}
 By (\ref{w/d2}), we have
\begin{align*}
  \ \psi^4\partial_i( \frac{w}{\psi^2}) \partial_j (\frac{w}{\psi^2})&=
  w_iw_j+O(|\nabla w|\psi)+\frac{4w^2\psi_i\psi_j}{\psi^2},\\ \psi^4\partial_{ij}( \frac{w}{\psi^2})&=w_{ij}\psi^2+O(|\nabla w|\psi+\psi^2).
\end{align*}
We will prove later $\phi''(s)-(\phi'(s))^2>0.$
Then, (\ref{hijqij}) reduces to
\begin{align}\label{new-main-1o}\begin{split}
    0&\geq\frac{1}{v} Q_{ij} w_{lij}w_l\psi^4+(\phi''( s)-(\phi' (s))^2) Q_{ij}(w_iw_j+O(|\nabla w|\psi+1))\\
    &\qquad+\phi' (s) Q_{ij}(w_{ij}\psi^2+O(|\nabla w|\psi+1)).\end{split}
\end{align}
By the properties in (\ref{property}), we have
\begin{align}\label{qijwij}\begin{split}
    Q_{ij}(w_{ij}\psi^2)&=T_{ij}(\psi^2\overline{A}(w-\log \psi +\widetilde{c_1}\psi)_{ij}
    -\psi^2|\nabla w|^2\delta_{ij}\\&\qquad+\psi^2\partial_{i}w\partial_{j}w+O(|\nabla w|\psi+1))\\
   & =k\beta_{n,k}e^{2k\widetilde{c_1}\psi}e^{2kw}\\
   &\qquad+T_{ij}(-\psi^2|\nabla w|^2\delta_{ij}+\psi^2\partial_{i}w\partial_{j}w+O(|\nabla w|\psi+1)).
\end{split}\end{align}
Next, by applying $\partial_m$ to (\ref{new-main}), we obtain
\begin{align}\label{diff1}\begin{split}
&T_{ij}\big(2\psi \partial_{m}\psi(\partial_{ij}w+\frac{1}{n-2}\Delta w\delta_{ij}+|\nabla w|^2\delta_{ij}-\partial_{i}w\partial_{j}w)\\
&\quad+\psi^2(\partial_{ijm}w+\frac{1}{n-2}\Delta w_m\delta_{ij}-2v\phi'(s)\partial_m(\frac{w}{\psi^2})\delta_{ij}-\partial_{i}w\partial_{jm}w
-\partial_{im}w\partial_{j}w)
\\&\quad -2\partial_{lm}w(\partial_{l}\psi\psi-\partial_l(\widetilde{c_1}\psi)\psi^2)\delta_{ij}+2\partial_{jm}w(\partial_{i}\psi\psi-\partial_i(\widetilde{c_1}\psi)\psi^2)+O(1+|\nabla w|)\big)\\
&=2k\beta_{n,k}e^{2k\widetilde{c_1}\psi}e^{2kw}\partial_{m}w+2k\beta_{n,k}e^{2k\widetilde{c_1}\psi}e^{2kw}\partial_{m}(\widetilde{c_1}\psi).
\end{split}\end{align}
We multiply \eqref{diff1} by $\frac{1}{v}\psi^2\partial_m w$ and sum over $m.$ Then by (\ref{property}) and (\ref{wliwl0}), we get
\begin{align}\label{wijmwm}\begin{split}
  \frac{1}{v} Q_{ij} w_{lij}w_l\psi^4&=\frac{2}{v}k\beta_{n,k}e^{2k\widetilde{c_1}\psi}e^{2kw}|\nabla w|^2\psi^2+O(1)\\&\quad+T_{ij}(2\phi'(s)\psi^2|\nabla w|^2\delta_{ij}-2\phi'(s)\psi^2 w_iw_j+O(1+|\nabla w|\phi'(s))).
\end{split}\end{align}
Note $0<\phi'(\frac{w}{\psi^2}),\phi''(\frac{w}{\psi^2})<1$ and substitute (\ref{qijwij}), (\ref{wijmwm}) and
$$Q_{ij}w_{i}w_{j}=T_{ij}w_{i}w_{j}+\frac{1}{n-2}T_{ll}|\nabla w|^2$$
into (\ref{new-main-1o}). Then, we have
\begin{align*}
   0&\geq O(1)+T_{ij}\big((\phi''( s)-(\phi' (s))^2)w_iw_j+(\phi''( s)-(\phi' (s))^2)\frac{1}{n-2}|\nabla w|^2\delta_{ij}\\
   &\qquad+2\phi'(s)\psi^2|\nabla w|^2\delta_{ij}-2\phi'(s)\psi^2 w_iw_j-\phi'(s)\psi^2|\nabla w|^2\delta_{ij}+\phi'(s)\psi^2w_{i}w_{j}\\&\qquad+O(1+|\nabla w|)\big)
   \\&=O(1)+T_{ij}\big((\phi''( s)-(\phi' (s))^2-\phi'(s)\psi^2)w_iw_j\\
   &\qquad+(\frac{1}{n-2}\phi''( s)-\frac{1}{n-2}(\phi' (s))^2+\phi'(s)\psi^2)|\nabla w|^2\delta_{ij}+O(1+|\nabla w|)\big)
\end{align*}
By the expression of $\phi,$  we have, for a large constant $C,$
\begin{align*}
    \phi'(\frac{w}{\psi^2})>\frac{1}{p3^pC_0},\end{align*}
and
\begin{align*}
   \phi''(\frac{w}{\psi^2})-(\phi')^2(\frac{w}{\psi^2})-C\phi'(\frac{w}{\psi^2}) >\frac{1}{p3^pC_0^2}(p-1-\frac{1}{p}-3CC_0).
\end{align*}
Fix $p$ large enough. Then, we have, for some positive $\epsilon,$
\begin{align*}
   C&\geq\epsilon T_{ij} w_iw_j+T_{ij}\big(2\epsilon|\nabla w|^2\delta_{ij}+O(1+|\nabla w|)\big)\\
   &\geq \epsilon T_{ij} w_iw_j+T_{ij}\big(\epsilon|\nabla w|^2\delta_{ij}+O(1)\delta_{ij}\big),
\end{align*}
where we used the fact $|T_{ij}|^2\leq T_{ii}T_{jj}.$
 Take $B$ large to be determined.

{\it Case 1. The matrix $\epsilon|\nabla w|^2\delta_{ij}+O(1)\delta_{ij}$ has an eigenvalue less than $B$.} In this case, the gradient estimate is immediate.

{\it Case 2. The matrix $\epsilon|\nabla w|^2\delta_{ij}+O(1)\delta_{ij}$ has all eigenvalues bigger than $B$.}
By absorbing lower order terms, we have  \begin{align*}
   C\geq\epsilon T_{ij} w_iw_j+T_{ll}B.
\end{align*} By (\ref{property}), we have $\sigma_{k-1}\leq C,$ independent of $B.$ Then by Proposition 4.2 in \cite{M.Gursky2}, \eqref{property} and the positive lower bound for $\sigma_{k}$, we can fix $B$ large enough
to get a contradiction.

Then, we have $|\nabla w|^2(x_0)\leq C.$ This finishes the proof.
\end{proof}

We now improve Lemma \ref{nabla-const} under the same assumption.

\begin{proof}[Proof of Theorem \ref{nabla}]
Take $\alpha$ as in Theorem \ref{nabla}, $\psi $ as in the proof of Lemma \ref{nabla-const} and $\widetilde{c_1},\cdot\cdot\cdot,\widetilde{c}_{n,1}\in C^\infty( \Omega)$ satisfying
\begin{align*}
  \widetilde{c_1}=c_1,\cdot\cdot\cdot,\widetilde{c}_{n,1}=c_{n,1}\quad\text{in}\quad D_{{\delta_2}/{2}},
\end{align*}
where $c_i,$ $i=1,\cdot\cdot\cdot,n-1,$ and $c_{n,1}$ are functions as in (\ref{c1}), (\ref{ci}) and (\ref{cn1}) and we rewrite the constant $\delta_3$ in Lemma \ref{nabla-const} as $\delta_2$.

Set
\begin{align}\label{f}f=\widetilde{c_1}\psi+\cdot\cdot\cdot+\widetilde{c}_{n,1}\psi^n\log\psi,\end{align} and
$$w=u+\log \psi -f.$$
First, we will prove, for some $C_0>1,$
\begin{align}\label{w/dn}
|\frac{w}{\psi^{2+2\alpha}}|\leq C_0\quad\text{in } \Omega.
\end{align}
By Theorem \ref{expansion}, (\ref{w/dn}) holds in $D_{\delta_2/2}.$ We point out that,
in order to apply Theorem \ref{expansion}, we require $2+2\alpha\le n$, which results in the
choice of $\alpha$ in the statement of Theorem \ref{nabla}.
Next, using $u_{j_1}$ and $u_{j_2},$
 obtained in the proof of Lemma \ref{nabla-const}, we know that \eqref{w/dn} holds in $\Omega\setminus D_{\delta_2/2}.$

We rewrite the equation  \eqref{main} as
\begin{align}\label{new-main-1}
\sigma_k(\psi^2(\overline{A}(w-\log \psi +f)))&=e^{2kf}(\frac{n-1}{n-2})^kC_n^ke^{2kw}\doteq e^{2kf}\beta_{n,k}e^{2kw}\quad \text{in } \Omega,
\end{align}
where $(\overline{A}(u))_{ij}$ is as in \eqref{Aijbar} and $f$ is as in \eqref{f}.
We use $T_{k-1}\doteq T$ for $(k-1)$-Newton transformation associated with $\psi^2\overline{A}(w-\log \psi +f)$, which is positive since $\psi^2\overline{A}\in \Gamma_k^+.$
Set $$Q_{ij}=T_{ij}+\frac{1}{n-2}T_{ll}\delta_{ij}.$$ Then, $Q_{ij}$ is positive definite by \cite{M.Gursky2}. By the properties in (\ref{property}), we have
\begin{align*}
    Q_{ij}(w_{ij}\psi^2)&=T_{ij}(\psi^2\overline{A}(w-\log \psi +f)_{ij}-\psi^2|\nabla w|^2\delta_{ij}\\
    &\qquad+\psi^2\partial_{i}w\partial_{j}w+O(|\nabla w|\psi+1))\\
    &=k\beta_{n,k}e^{2kf}e^{2kw}+T_{ij}(-\psi^2|\nabla w|^2\delta_{ij}+\psi^2\partial_{i}w\partial_{j}w+O(|\nabla w|\psi+1)),\end{align*}
and hence
\begin{align}\label{qijwij-1} Q_{ij}w_{ij} >T_{ij}(-|\nabla w|^2\delta_{ij}+\partial_{i}w\partial_{j}w+O(\frac{|\nabla w|}{\psi}+\frac{1}{\psi^2})).
\end{align}
Set
$$\phi(s)=\frac{1}{p^2(3C_0)^p}(2C_0+ s)^p,$$
for some $p$ large to be determined and $C_0$ in (\ref{w/dn}). Then,
 \begin{align*}
   \frac{1}{p^2} \geq \phi(s)>0\quad\text{for any }s \in[-C_0,C_0].
\end{align*}
Set
\begin{align*}
    h=(1+\frac{1}{2}|\nabla(\frac{ w}{\psi^{\alpha}})|^2)e^{\phi( \frac{w}{\psi^{2+2\alpha}})}\doteq ve^{\phi( \frac{w}{\psi^{2+2\alpha}})}.
\end{align*}
We will prove, for some constant $C,$
$$|h|_{L^{\infty}(\Omega)}\leq C.$$
This would imply the desired conclusion.

First, for an arbitrary point $x_0\in\partial \Omega,$ we can argue similarly as in the proof of Lemma \ref{nabla-const}. Note that $w$ defined in this proof satisfies $\frac{w}{\psi^{\alpha}}\equiv 0$ on $\partial \Omega$ and $\frac{w}{\psi^{\alpha}}\le Cd^{2+\alpha}$ in $D_{\delta_2}.$ Then, $\nabla_{x'}(\frac{w}{\psi^{\alpha}})\equiv0$ on $\partial\Omega$ and
\begin{align*}
   | \frac{\partial }{\partial\nu}(\frac{w}{\psi^{\alpha}})(x_0)|=|\lim _{d\rightarrow 0}\frac{(\frac{w}{\psi^{\alpha}})(x_0',d)-0}{d-0}|=0.
\end{align*} Hence, $\nabla (\frac{w}{\psi^{\alpha}})(x_0)=0$, implying $|h(x_0)|\leq C.$

Thus, without loss of generality, we can assume that the maximum of $h$ attains at a point $x_0\in\Omega.$ The proof is inspired by \cite{M.Gursky2}. Take $A$ large to be determined. Without of generality, we assume $|\nabla (\frac{w}{\psi^{\alpha}})(x_0)|\geq A$ is sufficiently large. Otherwise the conclusion is obvious.
All calculation below is at $x_0.$ For brevity, we write
$$s=\frac{w}{\psi^{2+2\alpha}}.$$
By differentiating $h$ once, we have $h_i=0$ and hence
\begin{align}\label{wliwl}
(\frac{w}{\psi^{\alpha}})_{li}(\frac{w}{\psi^{\alpha}})_l=-v\phi'(s)(\frac{w}{\psi^{2+2\alpha}})_i.
\end{align}
Using (\ref{w/dn}), we have
\begin{align*}
  (\frac{w}{\psi^{\alpha}})_i&=\frac{w_i}{\psi^{\alpha}}+O(\psi^{\alpha+1}),\\
   \partial_{ij}( \frac{w}{\psi^{\alpha}})&=\frac{w_{ij}}{\psi^{\alpha}}+O(\frac{|\nabla w|}{\psi^{\alpha+1}}),
  \\ \partial_{ij}( \frac{w}{\psi^{2\alpha+2}})&=\frac{w_{ij}}{\psi^{2\alpha+2}}+O(\frac{|\nabla w|}{\psi^{2\alpha+3}}+\frac{1}{\psi^2}).
\end{align*}
Apply $\partial_m$ to (\ref{new-main-1}) and then by Lemma \ref{nabla-const}, we have
\begin{align}\label{diff1-1}\begin{split}
    Q_{ij}(\frac{w}{\psi^{\alpha}})_{ijm}&=T_{ij} \{  -2 (\frac{w}{\psi^{\alpha}})_{lm}(\frac{w}{\psi^{\alpha}})_{l}\psi^{\alpha}\delta_{ij} +(\frac{w}{\psi^{\alpha}})_{im}(\frac{w}{\psi^{\alpha}})_{j}\psi^{\alpha}+(\frac{w}{\psi^{\alpha}})_{jm}(\frac{w}{\psi^{\alpha}})_{i} \psi^{\alpha} \\&+(\frac{w}{\psi^{\alpha}})_{im}O(\frac{1}{\psi})+O(|\nabla(\frac{w}{\psi^{\alpha}})|\frac{1}{\psi^{2}}+\frac{1}{\psi^{3+\alpha}}
    +|\nabla(\frac{w}{\psi^{\alpha}})|^2\frac{1}{\psi^{1-\alpha}})\} \\&+Q_{ij}w_{ij}O(\frac{1}{\psi^{1+\alpha}})+O(\frac{1}{\psi^{3+\alpha}}+|\nabla(\frac{w}{\psi^{\alpha}})|\frac{1}{\psi^{2}}).
\end{split}\end{align}
Next, differentiate $h$ one more time. Since $Q_{ij}$ is positive definite,
we have $0\geq Q_{ij}h_{ij}\frac{1}{ve^\phi}$ and hence
\begin{align}\label{hijqij-1}\begin{split}
   0&\geq\frac{1}{v} Q_{ij} (\frac{w}{\psi^{\alpha}})_{lij}(\frac{w}{\psi^{\alpha}})_l+(\phi''( s)-\big(\phi' (s)\big)^2) Q_{ij}(\frac{w}{\psi^{2+2\alpha}})_i(\frac{w}{\psi^{2+2\alpha}})_j\\
   &\qquad+\phi' (s) Q_{ij}(\frac{w}{\psi^{2+2\alpha}})_{ij}.\end{split}
\end{align}
We sum (\ref{diff1-1}) with $\frac{1}{v}(\frac{w}{\psi^{\alpha}})_m.$
Note $0<\phi'(\frac{w}{\psi^2}),\phi''(\frac{w}{\psi^2})<1.$  Then,
\begin{align}\label{wijmwm-1}\begin{split}
    &\frac{1}{v}Q_{ij}(\frac{w}{\psi^{\alpha}})_{ijm}(\frac{w}{\psi^{\alpha}})_m\\
    &\quad=T_{ij} \{2 \phi'(s)\psi^{2+2\alpha} ( |\nabla(\frac{w}{\psi^{2+2\alpha}})|^2\delta_{ij} -(\frac{w}{\psi^{2+2\alpha}})_i(\frac{w}{\psi^{2+2\alpha}})_{j}) \\
    &\qquad+\phi'(s)|\nabla(\frac{w}{\psi^{2+2\alpha}})|O(\frac{1}{\psi})+O(\frac{1}{\psi^{2}}+\frac{1}{\psi^{3+\alpha}}\frac{1}{A}
    +|\nabla(\frac{w}{\psi^{\alpha}})|\frac{1}{\psi^{1-\alpha}})\} \\
    &\qquad+Q_{ij}w_{ij}O(\frac{1}{\psi^{1+\alpha}}\frac{1}{A})+O(\frac{1}{\psi^{3+\alpha}}\frac{1}{A}+\frac{1}{\psi^{2}}).
\end{split}\end{align}
  Note $\phi'(\frac{w}{\psi^2})>\frac{1}{p3^pC_0}$ and we will prove later $\phi''(s)-(\phi'(s))^2>0.$
Then by (\ref{qijwij-1}), (\ref{wijmwm-1}) and Lemma \ref{nabla-const}, (\ref{hijqij-1}) reduces
\begin{align}\label{new-main-2}\begin{split}
   0&\geq O(\frac{1}{\psi^{3+\alpha}}\frac{1}{A}+\frac{1}{\psi^{2}})\\
   &\qquad+T_{ij} \bigg\{((\phi''( s)-\big(\phi' (s)\big)^2)\frac{1}{n-2}  +2 \phi'(s)\psi^{2+2\alpha} )|\nabla(\frac{w}{\psi^{2+2\alpha}})|^2\delta_{ij} \\
   &\qquad+((\phi''( s)-\big(\phi' (s)\big)^2) -2 \phi'(s)\psi^{2+2\alpha})(\frac{w}{\psi^{2+2\alpha}})_i(\frac{w}{\psi^{2+2\alpha}})_{j} \\
   &\qquad+\phi'(s)|\nabla(\frac{w}{\psi^{2+2\alpha}})|O(\frac{1}{\psi})+O(\frac{1}{\psi^{4+2\alpha}}
    )\bigg\}.
   \end{split}
\end{align}
Multiply \eqref{new-main-2} by $\psi^{4+2\alpha}.$
By
$$(\frac{w}{\psi^{2+2\alpha}})_i=(\frac{w}{\psi^{\alpha}})_i(\frac{1}{\psi^{2+\alpha}})+O(\frac{1}{\psi}),$$
we have
\begin{align}\label{new-main-11}
\begin{split}
   0&\geq O(1)+T_{ij} \bigg\{((\phi''( s)-\big(\phi' (s)\big)^2)\frac{1}{n-2}  +2 \phi'(s)\psi^{\alpha} )|\nabla(\frac{w}{\psi^{\alpha}})|^2\delta_{ij} \\&\qquad+((\phi''( s)-\big(\phi' (s)\big)^2) -2 \phi'(s)\psi^{\alpha})(\frac{w}{\psi^{\alpha}})_i(\frac{w}{\psi^{\alpha}})_{j}\\&\qquad+|\nabla(\frac{w}{\psi^{\alpha}})|O(1)+O(1
    )\bigg\}.
   \end{split}
\end{align}
By the expression of $\phi,$ for a large constant $C,$ we have
\begin{align*}
    \phi'(\frac{w}{\psi^2})&>\frac{1}{p3^pC_0},\\
   \phi''(\frac{w}{\psi^2})-(\phi')^2(\frac{w}{\psi^2})-C\phi'(\frac{w}{\psi^2}) &>\frac{1}{p3^pC_0^2}(p-1-\frac{1}{p}-3CC_0).
\end{align*}
Fix $p$ large enough. Then, we have, for some positive $\epsilon,$
\begin{align*}
   C\geq\epsilon T_{ij} (\frac{w}{\psi^{\alpha}})_i(\frac{w}{\psi^{\alpha}})_j+T_{ij}\big(2\epsilon|\nabla(\frac{w}{\psi^{\alpha}})|^2+O(1)\big)\delta_{ij},
\end{align*}
where we used the fact $|T_{ij}|^2\leq T_{ii}T_{jj}.$
 Take $B$ large to be determined and we consider two cases.

{\it Case 1.} If the matrix $$2\epsilon|\nabla(\frac{w}{\psi^{\alpha}})|^2\delta_{ij}+O(1)\delta_{ij}$$ has an eigenvalue less than $B$, then the gradient estimate is immediate.

{\it Case 2.} Otherwise, absorbing lower order terms, we have
\begin{align*}
   C\geq\epsilon T_{ij} (\frac{w}{\psi^\alpha})_i(\frac{w}{\psi^\alpha})_j+BT_{ll}.
\end{align*}
We argue similarly as in the proof of Lemma \ref{nabla-const}.
Then we have $|\nabla (\frac{w}{\psi^\alpha})|(x_0)\leq C.$
\end{proof}

\begin{remark}\label{alpha-n}
We emphasize again that the validity of (\ref{w/dn}) requires a  relation of $\alpha$ and $n.$ In fact, for a general $\alpha\ge \frac{1}{2}$ and $w$ defined above, when $n\ge 2+2\alpha,$ we have
\begin{align*}
|\frac{w}{\psi^{2+2\alpha}}|\le C,\quad\quad
|\frac{\nabla w}{\psi^{\alpha}}|\le C.
\end{align*}
\end{remark}

\section{The $C^2$-Estimates}

In this section, we derive estimates of second derivatives.

\begin{proof}[Proof of Theorem \ref{C2}]
Take $w$, $\psi$ and $f$ as defined in the proof of Theorem \ref{nabla}. This proof is divided into two steps.

\emph{Step 1.} We will prove that there exists a constant $C$, depending only on $\partial \Omega,$ $n$ and $k$, such that
$$\Delta w\ge -C\quad\text{in }\Omega.$$
We proceed to prove this in $D_{\delta_3/2}$, where $\delta_3$ is the constant in Theorem \ref{nabla}.
The proof in $\Omega\setminus D_{\delta_3/2}$ is similar but easier.

By \eqref{main}-\eqref{mainbdry}, Theorem \ref{expansion} and noting $\psi=d$ in $D_{\frac{1}{2}\delta_3},$ we have, in $D_{\frac{1}{2}\delta_3},$
\begin{align}\label{sk}
\begin{split}
  \frac{n}{(C_n^k)^{1/k}} \bigg(\sigma_k\big(\lambda(\frac{1}{n-2}A(u))\big)\bigg)^{1/k}&=\frac{1}{d^2}(\frac{n(n-1)}{n-2})e^{2(w+f)}
  \\&=\frac{1}{d^2}(\frac{n(n-1)}{n-2})(1+2c_1d+O(d^2)).
  \end{split}
   \end{align}
By the expression of $A(u)$ in \eqref{Aij}, a straightforward calculation yields, in $D_{\frac{1}{2}\delta_3},$
   \begin{align}\label{s1}
   \begin{split}
  \sigma_1\bigg(\lambda\big(\frac{1}{n-2}A(u)\big)\bigg)&=(1+\frac{n}{n-2})\Delta u+(n-1)|\nabla u|^2\\
  &=(1+\frac{n}{n-2})\Delta w+\frac{1}{d^2}(\frac{n(n-1)}{n-2})+\frac{c_1}{d}\frac{2n(n-1)}{n-2}+O(1),
  \end{split}
\end{align}
where we used the fact that $-\Delta d=H_{\partial\Omega}+O(d)$ and the definition of $c_1$ in \eqref{c1}. By Maclaurin's inequality,
we have
\begin{align*}
 \sigma_1\bigg(\lambda\big(\frac{1}{n-2}A(u)\big)\bigg)\ge  \frac{n}{(C_n^k)^{1/k}} \sigma_k^{1/k}\bigg(\lambda\big(\frac{1}{n-2}A(u)\big)\bigg).
\end{align*}
By combining with \eqref{sk} and \eqref{s1} and by a straightforward calculation, we have $\Delta w\ge -C.$

\emph{Step 2.} Next, we will prove
$$\max_{\gamma\in \mathbb S^{n-1},p\in \overline{\Omega}}\partial_\gamma\partial_\gamma w\leq C,$$
where $C$ is a positive constant depending only on $\Omega,$ $n$ and $k.$
The proof of this step is inspired by \cite{M.Gursky2}.

First, assume $n\ge 2+2\alpha,$ for some $\alpha\ge 1.$
Later on, we will take $\alpha=3$ but we write it in the present form to demonstrate why we choose $\alpha=3.$ By Remark \ref{alpha-n}, we have
\begin{align}\label{8}
\begin{split}
|\frac{w}{\psi^{2+2\alpha}}|\le C,\quad\quad
|\frac{\nabla w}{\psi^{\alpha}}|\le C.
\end{split}
\end{align}
Hence, $\nabla w\equiv 0$ on $\partial \Omega.$ Moreover, in principal coordinates at any
boundary point $x_0$ with $e_n$ as the unit inner normal vector to $\partial \Omega$ at $x_0,$ we have
\begin{align*}
    \nabla_{x'} \nabla w(x_0)= 0,\end{align*}
and
\begin{align*}
    \\ |\nabla_n\nabla w(x_0)|\le \lim_{d\rightarrow0}|\frac{Cd-0}{d}|=C.
\end{align*}
Therefore, we obtain
\begin{align}\label{C2bdry}
  |\nabla^2 w|_{L^\infty(\partial \Omega)}\le C.
\end{align}
Next, set
\begin{align}\label{h}
    h(p,\gamma)=\partial_\gamma\partial_\gamma w(p)+\Lambda\frac{|\nabla w|^2}{\psi^{2\alpha}}(p)\quad
    \text{for } (p,\gamma)\in \overline{\Omega}\times \mathbb{S}^{n-1},
\end{align}
where $\Lambda$ is a constant to be determined.
We will prove
$$|h|_{L^\infty( \overline{\Omega}\times \mathbb{S}^{n-1})}\le C,$$
which implies the conclusion in Step 2 by \eqref{8}.

Without loss of generality, we  assume that the maximum of $h$ attains at $(p, \gamma)\in\Omega\times \mathbb{S}^{n-1}.$ Otherwise, by \eqref{C2bdry}, the conclusion is immediate. Then, by rotating coordinates at $p$, we may assume $\frac{\partial}{\partial x_1}=\gamma$. Set
\begin{align*}
\widetilde{h}(x)=h(x,\frac{\partial}{\partial x_1})=w_{11}+\Lambda\frac{|\nabla w|^2}{\psi^{2\alpha}}.
\end{align*}
Without loss of generality, we can assume $w_{11}(p)\ge 1.$ Otherwise, the desired result is immediate.
Since $p$ is the maximum point of $\widetilde{h},$
we have, at $p,$
\begin{align}\label{tiltahi}
0=\partial_i\widetilde{h}=w_{11i}+\Lambda\frac{2w_kw_{ki}}{\psi^{2\alpha}}-2\alpha\Lambda\frac{\psi_i|\nabla w|^2}{\psi^{2\alpha+1}},
\end{align}
and
\begin{align}\label{tiltahij}
\begin{split}
0\ge \partial_{ij}\widetilde{h}&=w_{11ij}+\Lambda\frac{2w_{kj}w_{ki}}{\psi^{2\alpha}}
 -4\alpha\Lambda\frac{w_{kj}w_{k}\psi_i}{\psi^{2\alpha+1}}
  -4\alpha\Lambda\frac{w_{ki}w_{k}\psi_j}{\psi^{2\alpha+1}}\\
  &\qquad-2\alpha\Lambda\frac{\psi_{ij}|\nabla w|^2}{\psi^{2\alpha+1}}
  +2\alpha(2\alpha+1)\Lambda\frac{\psi_{i}\psi_{j}|\nabla w|^2}{\psi^{2\alpha+2}}
  +\Lambda\frac{2w_{k}w_{kij}}{\psi^{2\alpha}}.
  \end{split}
\end{align}
All the calculation below is at $p.$
Recall from Section 3 that
$$Q_{ij}=T_{ij}+\frac{1}{n-2}T_{ll}\delta_{ij}.$$
Since $Q_{ij}$ is positive definite, using \eqref{8} and \eqref{tiltahij}, we have, at $p,$
\begin{align}\label{tiltahijqij}
0\ge Q_{ij}w_{11ij}+\Lambda\frac{2}{\psi^{2\alpha}}Q_{ij}w_{kj}w_{ki}
+\Lambda O(\frac{1}{\psi^{\alpha}})Q_{ij}w_{kij}+\Lambda T_{ij}\delta_{ij}O(\frac{1}{\psi^2}
+\frac{|\nabla^2 w|}{\psi^{\alpha+1}}).
\end{align}
We write  $\overline{A}_{ij}= \overline{A}_{ij}(u)$ for convenience.

First, we consider the term $Q_{ij}w_{kij}.$
By \eqref{property} and \eqref{8}, differentiating \eqref{new-main-1} with respect to $x_m$, we have  \begin{align*}
 \partial_m (\text{L.H.S.})&= \partial_m \sigma_k(\psi^2\overline{A}_{ij}) =T_{ij}(2\psi\psi_m\overline{A}_{ij}+\psi^2(\overline{A}_{ij})_m)=\psi^2 T_{ij}(\overline{A}_{ij})_m+k\sigma_k\frac{2\psi_m}{\psi},\\
  \partial_m (\text{R.H.S.})&=\partial_m (e^{2k(f+w)}\beta_{n,k})=O(1).
\end{align*}
Hence,
\begin{align}\label{TijAijm}
T_{ij}(\overline{A}_{ij})_m=O(\frac{1}{\psi^3}).
\end{align}
On the other hand, substituting $u=-\log \psi+w+f$ in  $\overline{A}_{ij}(u)$ and then by \eqref{8}, we have
\begin{align}\label{qijwijm}
Q_{ij}w_{ijm}=O(\frac{1}{\psi^3})(1+T_{ij}\delta_{ij})+O(\frac{|\nabla^2 w|}{\psi})T_{ij}\delta_{ij}.
\end{align}

Next, we consider the term $Q_{ij}w_{11ij}.$
Set $\sigma=(\sigma_k)^{1/k}.$ Then
\begin{align}\label{sigma}
\sigma(\lambda(\psi^2\overline{A}_{ij}))=e^{2(f+w)}\beta_{n,k}^{1/k}.
\end{align}
Differentiate \eqref{sigma} twice with respect to $x_1$ and compare the R.H.S. with the L.H.S. By \eqref{8} and the concavity of $\sigma,$ we have
\begin{align}\label{TijAij11}
T_{ij}(\psi^2\overline{A}_{ij})_{11}\ge -C-Cw_{11}.
\end{align}
Substituting $u=-\log \psi+w+f$ in $\overline{A}_{ij}(u)$ and then by \eqref{property},
\eqref{8}, \eqref{tiltahi} and \eqref{TijAijm}, we have
\begin{align}\label{qijw11ij}
Q_{ij}w_{11ij}\ge -\frac{C}{\psi^2}-\frac{C}{\psi^2}w_{11}+
T_{ij}\delta_{ij}O(\frac{1}{\psi^4}+|\nabla ^2 w|^2+\frac{|\nabla ^2 w|}{\psi^{\alpha+1}}).
\end{align}
Substitute \eqref{qijwijm} and \eqref{qijw11ij} into \eqref{tiltahijqij}. Then, multiply
\eqref{tiltahijqij} by $\psi^{3+\alpha}.$ By \eqref{8}, we have
\begin{align*}
0 &\ge -C-C\psi^{\alpha+1}w_{11}+
T_{ij}\delta_{ij}(\frac{1}{\psi^{\alpha-3}}\frac{2 \Lambda}{n-2}\sum_{i,k}|w_{ki}|^2+O((1+\Lambda)+\psi^{\alpha+3}|\nabla ^2 w|^2\\&\qquad+(1+\Lambda)|\nabla ^2 w|\psi^2)+\frac{2 \Lambda}{\psi^{\alpha-3}}T_{ij}w_{kj}w_{ki}.
\end{align*}
Choose $\Lambda$ large enough and, without loss of generality, we may assume $\sum_{i,k}|w_{ki}|^2$ is large and much larger than $\Lambda$. Then we have, for a positive constant $c,$
\begin{align}\label{bd-for-sigmak-1}
C+C\psi^{\alpha+1}w_{11}\ge \sigma_{k-1}(\psi^2\overline{A}_{ij})c\sum_{i,k}|w_{ki}|^2\ge \sigma_{k-1}(\psi^2\overline{A}_{ij})cw_{11}^2.
\end{align}
On the other hand, by Maclaurin's inequality, we have
\begin{align*}
    \sigma_{k-1}\ge  ( \frac{\sigma_{k} }{\binom{n}{k}})^{\frac{k-1}{k}}\binom{n}{k-1}.
\end{align*}
Note that, for some positive $c_0,$ $$\sigma_k(\psi^2\overline{A}_{ij})=e^{2k(f+w)}\beta_{n,k}>c_0,$$
where we used \eqref{8} and the definition of $f.$ Then, we have ,for some positive $c_1,$
$$\sigma_{k-1}(\psi^2\overline{A}_{ij})>c_1.$$
Hence, \eqref{bd-for-sigmak-1} implies, for some positive constant $\epsilon _0,$
\begin{align*}
C+Cw_{11}\ge \epsilon _0w_{11}^2.
\end{align*}
Then, we draw the conclusion $w_{11}\le C$ and finish the proof in Step 2.

Combining the two steps, we have the desired conclusion.
\end{proof}

\end{document}